\renewcommand{\a }{\alpha }
\renewcommand{\b }{\beta }
\renewcommand{\d}{\delta }
\newcommand{\D }{\Delta }
\renewcommand{\l }{\lambda }
\newcommand{\n }{\nabla }
\newcommand{\s }{\sigma }
\renewcommand{\t }{\tau }
\renewcommand{\O }{\Omega }
\newcommand{\ov}{\overline}
\def\p{\partial}
\newcommand{\wtilde }{\widetilde}
\newcommand{\be}{\begin{equation}}
\newcommand{\ee}{\end{equation}}
\newcommand{\R}{\mathbb{R}}
\renewcommand{\P}{\mathbb{P}}
\newcommand{\N}{\mathbb{N}}
\newcommand{\no}{\noindent}
\newcommand{\dis}{\displaystyle}
\newtheorem{theorem}{Theorem}[section]
\newtheorem{proposition}[theorem]{Proposition}
\newtheorem{example}[theorem]{Example}
\newcommand{\bpr}{\begin{proposition}}
\newcommand{\epr}{\end{proposition}}
\newcommand{\bex}{\begin{example}\rm}
\newcommand{\eex}{\end{example}}
\begin{document}

\newtheorem{lem}{Lemma}[section]
\newtheorem{pro}[lem]{Proposition}
\newtheorem{thm}[lem]{Theorem}
\newtheorem{rem}[lem]{Remark}
\newtheorem{cor}[lem]{Corollary}
\newtheorem{df}[lem]{Definition}

\title[Symmetry and uniqueness of Liouville-type problems]{Symmetry and uniqueness of solutions to some Liouville-type  equations and systems}

\author{Changfeng Gui, Aleks Jevnikar, Amir Moradifam}

\address{ Changfeng Gui,~Department of Mathematics, University of Texas at San Antonio, Texas, USA}
\email{changfeng.gui@utsa.edu}

\address{ Aleks Jevnikar,~University of Rome `Tor Vergata', Via della Ricerca Scientifica 1, 00133 Roma, Italy}
\email{jevnikar@mat.uniroma2.it}

\address{Amir Moradifam,~Department of Mathematics, University of California, Riverside, California, USA}
\email{moradifam@math.ucr.edu}

\thanks{The first author is partially supported by  NSF grant DMS-1601885 and NSFC grant No 11371128.
The second author is supported by PRIN12 project: \emph{Variational and Perturbative Aspects of Nonlinear Differential Problems} and FIRB project:
\emph{Analysis and Beyond}.}

\keywords{Geometric PDEs, Sinh-Gordon equation, Cosmic string equation, Toda system, Sphere covering inequality, Symmetry results, Uniqueness results}

\subjclass[2000]{35J61, 35R01, 35A02, 35B06.}

\begin{abstract}
We prove symmetry and uniqueness results for three classes of Liouville-type problems arising  in geometry and mathematical physics: asymmetric Sinh-Gordon equation, cosmic string equation and Toda system, under certain assumptions on the mass associated to these problems. The argument is in the spirit of the Sphere Covering Inequality which for the first time is used in treating different exponential nonlinearities and systems.  
\end{abstract}

\maketitle

\section{Introduction}

In this paper, we shall consider three classes of Liouville-type equations and systems: asymmetric Sinh-Gordon equation,  cosmic string equation and Toda system. These problems arise  in geometry and mathematical physics.  We are mainly concerned about  the  symmetry and uniqueness questions under certain assumptions on the mass associated to these problems.  

\medskip
\subsection{Asymmetric Sinh-Gordon equation}
Consider the following version of the asymmetric Sinh-Gordon equation
\begin{equation} \label{eq:sinh-gordon}
\left\{ \begin{array}{rll}
-\D u = & \rho \dfrac{ e^{u}+\frac{\a}{|\a|} e^{\a u} }{\int_\O \left(e^{u}+ e^{\a u}\right) \,dx} & \mbox{in } \O, \vspace{0.2cm}\\
 u = & 0 & \mbox{on } \p \O,
\end{array}
\right.
\end{equation}
where $\a\in[-1,1), \a\neq 0$, $\rho>0$ is a parameter and $\O\subset\R^2$  is a bounded domain with smooth boundary $\p\O$. Equation \eqref{eq:sinh-gordon} is known also as Neri's mean field equation and arises in the context of the statistical mechanics description of $2D$-turbulence introduced in \cite{ons}. In the model where the circulation number density is subject to a probability measure, under a \emph{stochastic} assumption on the vortex intensities one obtains the following equation (see \cite{neri}):
\begin{equation} \label{eq:prob}
\left\{ \begin{array}{rll}
  - \D u = & \rho \dis{\int_{[-1,1]} \b \frac{ e^{\b u} \,\mathcal P(d\b) }{\iint_{[-1,1]\times\O} e^{\b u} \,\mathcal P(d\b) \, dx}}  & \mbox{in } \O, \vspace{0.2cm}\\
       u = & 0 & \mbox{on } \p \O,   
\end{array}
\right.      
\end{equation}
where $u$ stands for the stream function of a turbulent Euler flow, $\mathcal P$ is a Borel probability measure defined in $[-1,1]$ describing the point vortex intensity distribution and $\rho>0$ is a physical constant associated to the inverse temperature.  Equation \eqref{eq:sinh-gordon} is related to the latter model when $\mathcal P$ is supported in two points.

On the other hand, a \emph{deterministic} assumption on the vortex intensities yields the following model (see \cite{sa-su}):
\begin{equation} \label{modif}
\left\{ \begin{array}{rll}
-\D u = & \rho \left(\dfrac{ e^{u}}{\int_\O e^{u} \,dx} + \frac{\a}{|\a|}\dfrac{ e^{\a u} }{\int_\O e^{\a u} \,dx}\right) & \mbox{in } \O, \vspace{0.2cm}\\
 u = & 0 & \mbox{on } \p \O.
\end{array}
\right.
\end{equation}
Concerning the analysis of the latter equation we refer the interested readers to \cite{ao, jev, jev2, jev3, jev4, jwy, jwy2, je-ya, je-ya2, rtzz, ri-ze2}. The arguments presented here do not apply to  \eqref{modif},  and we postpone its analysis to a forthcoming paper.

Observe that by taking $\a=-1$ in \eqref{eq:sinh-gordon} we end up with the standard Sinh-Gordon equation, while for $\mathcal P$ supported in a single point we derive the standard mean field equation
\begin{equation} \label{mf}
\left\{ \begin{array}{rll}
-\D u = & \rho \dfrac{ e^{u}}{\int_\O e^{u} \,dx}  & \mbox{in } \O, \vspace{0.2cm}\\
 u = & 0 & \mbox{on } \p \O,
\end{array}
\right.
\end{equation}
which is related to the prescribed Gaussian curvature problem and Euler flows (see \cite{ba, s} and \cite{clmp, ki}, respectively). The latter equation has been widely studied and we refer to the surveys \cite{mal, tar}. Recently in \cite{gui1, gui2, gui3} the authors proved the Sphere Covering Inequality (see Theorem~\ref{main} below) which leads to several symmetry and uniqueness results for the latter equation. The Sphere Covering Inequality \cite{gui1} will also be a crucial tool in this paper. 

\medskip

Returning to \eqref{eq:sinh-gordon},  some partial existence results and blow-up analysis was carried out in \cite{ri-ze1, ri-ze3}, while a complete existence result for \eqref{eq:prob} with $supp \,\mathcal P \subset [0,1]$ was given in \cite{dm-ri}. On the other hand, we are not aware of any symmetry or uniqueness results for the latter equation with the only exception of \cite{sa-su-ta} where \eqref{modif} is considered. We present here several results in this direction, under natural assumptions both on the parameter $\rho$ and the domain $\O$. Due to different features  of problem \eqref{eq:prob} depending on whether $supp \,\mathcal P \subset [0,1]$ or $supp \,\mathcal P \subset [-1,1]$  we will distinguish  these two cases in the discussion below. In the first situation we may rewrite \eqref{eq:sinh-gordon} as
\begin{equation} \label{eq1}
\left\{ \begin{array}{rll}
-\D u = & \rho \dfrac{ e^{u}+e^{a u} }{\int_\O \left(e^{u}+ e^{a u}\right) \,dx} & \mbox{in } \O, \vspace{0.2cm}\\
 u = & g(x) \geq 0 & \mbox{on } \p \O,
\end{array}
\right.
\end{equation}
with $a\in(0,1)$.  Our first result is the following.

\begin{thm} \label{thm0}
Let $\O \subset \R^2$ be a simply-connected domain and $g\in C(\p\O)$ be a non-negative function.  Suppose $\rho \leq 4\pi$. If $u_1$ and $u_2$ are two solutions of \eqref{eq1} such that
\begin{equation}\label{integralEquality}
\int_\O \left(e^{u_1}+ e^{a u_1}\right) \,dx=\int_\O \left(e^{u_2}+ e^{a u_2}\right) \,dx,
\end{equation}
then $u_1\equiv u_2$. 
\end{thm}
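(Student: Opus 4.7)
The plan is to reduce the uniqueness question to a single application of the Sphere Covering Inequality (Theorem \ref{main}). Let $M$ denote the common value of the integrals in \eqref{integralEquality}, and set $v_i := u_i + \log(\rho/M)$ for $i=1,2$. A direct computation from \eqref{eq1} yields
\[
-\D v_i \;=\; e^{v_i} + \frac{\rho}{M}\,e^{a u_i} \;\geq\; e^{v_i} \qquad \mbox{in } \O,
\]
so that $v_1$ and $v_2$ are supersolutions of $-\D v = e^v$ sharing the common boundary trace $v_1\equiv v_2\equiv g + \log(\rho/M)$ on $\p\O$. The hypothesis $g\geq 0$ enters only via the maximum principle to give $u_i\geq 0$, which keeps the setting clean.

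Next, using \eqref{integralEquality} together with the trivial positivity $\int_\O e^{au_i}\,dx > 0$, I would evaluate the total mass that is fed into SCI:
\[
\int_\O(e^{v_1}+e^{v_2})\,dx \;=\; \frac{\rho}{M}\int_\O(e^{u_1}+e^{u_2})\,dx \;=\; 2\rho - \frac{\rho}{M}\int_\O(e^{au_1}+e^{au_2})\,dx \;<\; 2\rho \;\leq\; 8\pi.
\]
The \emph{strictness} of this inequality is essential because the SCI bound is exactly $8\pi$. Assume now by contradiction that $u_1\not\equiv u_2$, hence $v_1\not\equiv v_2$. Since the two supersolutions coincide on $\p\O$ but differ somewhere inside, the open set $\{v_1>v_2\}\subset\O$ (or $\{v_2>v_1\}$) is non-empty and $v_1=v_2$ along its relative boundary in $\ov\O$. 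Selecting a simply-connected component $D$ and invoking Theorem \ref{main} on the strictly ordered pair of supersolutions $v_1>v_2$ on $D$, with matching boundary values on $\p D$, yields
\[
\int_D(e^{v_1}+e^{v_2})\,dx \;\geq\; 8\pi,
\]
whence $\int_\O(e^{v_1}+e^{v_2})\,dx \geq 8\pi$, contradicting the strict estimate above. Therefore $u_1\equiv u_2$.

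The point I expect to be the main obstacle is precisely this localization step: a connected component of an open subset of a planar simply-connected domain need not itself be simply-connected (it could, for instance, be an annular region). Handling this topology cleanly---either by recursing into the holes, which must contain components of the opposite set $\{v_2>v_1\}$ and hence allow the roles of $v_1$ and $v_2$ to be swapped, or by appealing to a variant of the Sphere Covering Inequality that dispenses with strict interior ordering---is where the technical care of the argument lies.
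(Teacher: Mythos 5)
There is a genuine gap, and it is fatal to the argument as written: your normalization $v_i=u_i+\log(\rho/M)$ produces functions with the \emph{wrong sign} for the Sphere Covering Inequality. Indeed, from \eqref{eq1} you correctly compute $-\D v_i=e^{v_i}+\frac{\rho}{M}e^{au_i}$, i.e.
$\D v_i+e^{v_i}=-\frac{\rho}{M}e^{au_i}<0$,
so $f_i<0$, whereas Theorem \ref{main} requires $f_2\geq f_1\geq 0$. This is not a cosmetic hypothesis: the whole mechanism (Bol's inequality, Proposition \ref{bol}, and the rearrangement estimate of Proposition \ref{rearr}) needs $\D w+e^w\geq 0$, and the covering inequality is simply false for supersolutions. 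For instance, on $B_\e(0)$ take $w_1=-|x|^2-K$ and $w_2=w_1+(\e^2-|x|^2)$ with $K$ large: both satisfy $\D w_i+e^{w_i}<0$, they are strictly ordered inside and equal on the boundary, yet $\int(e^{w_1}+e^{w_2})\,dx$ is as small as you like. Moreover, even ignoring the sign, the required monotone coupling fails: on $\{u_1>u_2\}$ one has $e^{au_1}>e^{au_2}$, hence $f_1<f_2$ while $v_1>v_2$, which is the opposite of the ordering demanded by Theorem \ref{main}.

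The paper's proof repairs exactly this point by writing the equation as $\D u+\frac{2\rho}{M}e^u=\frac{\rho}{M}(e^u-e^{au})$ and setting $v=u+\log 2+\log\rho-\log M$; then $\D v+e^v=f(u):=\frac{\rho}{M}(e^u-e^{au})$, which is \emph{positive} because $g\geq 0$ and the maximum principle give $u>0$, and $a<1$ gives $e^u>e^{au}$; the monotonicity of $f$ also yields the correct ordering $f(u_1)>f(u_2)>0$ on $\{u_1>u_2\}$. The price of the extra factor $2$ is that the total mass bound degrades from your $2\rho$ to $4\rho$, so a single application of the SCI no longer suffices: the paper uses \eqref{integralEquality} to produce \emph{both} sign regions $\O_1=\{u_1>u_2\}$ and $\O_2=\{u_2>u_1\}$ (if $u_1-u_2$ did not change sign the two integrals could not be equal), applies the SCI on each to get $\int_\O(e^{v_1}+e^{v_2})\,dx>16\pi$, and concludes $4\rho>16\pi$, i.e.\ $\rho>4\pi$. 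Incidentally, your two side concerns are non-issues in the paper's setup: the strictness of the SCI is guaranteed by $f_i\not\equiv 0$ in Theorem \ref{main}, and Remark \ref{rem:ineq} dispenses with the simple-connectivity of the subregions, since the SCI holds on any $\O_1\subset\O$ once the ambient domain is simply-connected.
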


\begin{cor} \label{thm1}
Under the condition of Theorem \ref{thm0}, assume further $\Omega$ and $g$  are evenly symmetric about a line.   Then,  any solution of \eqref{eq1} must be evenly symmetric about that line.  In particular, if $\O$ is radially symmetric and $g$ is a non-negative constant, then $u$ is radially symmetric.
\end{cor}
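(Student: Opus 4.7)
The plan is to reduce the corollary directly to the uniqueness statement of Theorem~\ref{thm0} via a reflection argument. Without loss of generality I would assume the line of symmetry is the $x_2$-axis (one can always translate and rotate coordinates so that this holds; $\O$ and $g$ then satisfy $\O = \{(-x_1,x_2):(x_1,x_2)\in\O\}$ and $g(-x_1,x_2)=g(x_1,x_2)$ on $\p\O$).

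Given any solution $u$ of \eqref{eq1}, define its reflection
\[
\tilde u(x_1,x_2) := u(-x_1,x_2), \qquad (x_1,x_2)\in\O.
\]
The first step is to verify that $\tilde u$ is also a solution of \eqref{eq1}. This is immediate: the Laplacian is invariant under reflection, $\p\O$ and $g$ are symmetric by hypothesis, and a change of variables $y_1=-x_1$ in the denominator of the right-hand side shows
\[
\int_\O\bigl(e^{\tilde u}+e^{a\tilde u}\bigr)\,dx=\int_\O\bigl(e^{u}+e^{a u}\bigr)\,dx.
\]
In particular, the normalizing integral is the same for $u$ and $\tilde u$, so condition \eqref{integralEquality} of Theorem~\ref{thm0} is automatically satisfied. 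Applying Theorem~\ref{thm0} to the pair $(u,\tilde u)$ gives $u\equiv\tilde u$, which is precisely the even symmetry of $u$ about the $x_2$-axis.

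For the final assertion, if $\O$ is a disk (the only simply-connected radially symmetric planar domain) centered, say, at the origin and $g$ is a non-negative constant, then $\O$ and $g$ are evenly symmetric about \emph{every} line through the origin. The previous step then yields that any solution $u$ is symmetric about every such line, and hence is radially symmetric about the origin.

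I don't anticipate a serious obstacle here: the whole argument is an application of the hypothesis and a single invocation of Theorem~\ref{thm0}. The only subtlety worth stating carefully is that the simple-connectedness hypothesis of Theorem~\ref{thm0} is preserved by the reflection (which is obvious, but should be noted), and that the symmetry hypothesis on $g$ is exactly what is needed to ensure $\tilde u$ satisfies the same boundary condition as $u$.
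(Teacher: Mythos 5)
Your proposal is correct and follows essentially the same argument as the paper: reflect the solution across the symmetry line, observe that the reflected function solves \eqref{eq1} with the same normalizing integral so that \eqref{integralEquality} holds automatically, and invoke Theorem~\ref{thm0} to force the solution to coincide with its reflection. The paper states this as a contradiction (a non-symmetric solution would produce two distinct solutions, forcing $\rho>4\pi$), but the content is identical, including the reduction of radial symmetry to symmetry about every line through the center.
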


We will exploit the fact that for $supp \,\mathcal P \subset [0,1]$ equation \eqref{eq:prob} shares some features with the mean field equation \eqref{mf}. Indeed we shall rewrite \eqref{eq:prob} in the form of \eqref{mf} and apply the Sphere Covering Inequality  (see \cite{gui1}) to get the desired results.

\begin{rem}
The argument for Theorem \ref{thm0} can be adapted to treat the more general case where the probability measure $\mathcal P$ in \eqref{eq:prob}  is supported at $(m+1)$ points, i.e.
$$
\left\{ \begin{array}{rll}
-\D u = & \rho \dfrac{ e^{u}+e^{a_1 u}+\cdots+e^{a_m u} }{\int_\O \left(e^{u}+ e^{a_1 u} + \cdots e^{a_m u}\right) \,dx} & \mbox{in } \O, \vspace{0.2cm}\\
 u = & g \geq 0 & \mbox{on } \p \O,
\end{array}
\right.
$$
with $a_i\in(0,1)$ for all $i$. Indeed if  $\rho \leq \dfrac{8\pi}{m+1}$ and
\[\int_\O \left(e^{u_1}+ e^{a_1 u_1} + \cdots e^{a_m u_1}\right) \,dx = \int_\O \left(e^{u_2}+ e^{a_1 u_2} + \cdots e^{a_m u_2}\right) \,dx,\]
then we must necessarily have $u_1\equiv u_2$. In particular, Corollary \ref{thm1} also generalizes to  the above equation. The case where $a_i>1$ fore some $i$ can be carried out as well and we refer to Remark \ref{rem:a} for more details.
\end{rem}

\medskip

On the other hand, for the general case $supp \,\mathcal P \subset [-1,1]$, the problem \eqref{eq:prob} substantially differs from the standard equation \eqref{mf}. In this case we may rewrite \eqref{eq:sinh-gordon} as
\begin{equation} \label{eq2}
\left\{ \begin{array}{rll}
-\D u = & \rho \dfrac{ e^{u}-e^{-a u} }{\int_\O \left(e^{u}+ e^{-a u}\right) \,dx} & \mbox{in } \O, \vspace{0.2cm}\\
 u = & 0 & \mbox{on } \p \O,
\end{array}
\right.
\end{equation}
with $a\in(0,1]$. Observe that $u\equiv 0$ is a solution of the latter problem. We indeed show that for $\rho\leq \dfrac{8\pi}{1+a}$ the trivial solution is the only solution.
\begin{thm} \label{thm2}
Suppose $\rho\leq \dfrac{8\pi}{1+a}$ and $\O\subset \R^2$ simply-connected. Then, equation \eqref{eq2} admits only the trivial solution $u\equiv 0$.
\end{thm}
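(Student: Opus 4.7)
I argue by contradiction: suppose $u \not\equiv 0$ solves \eqref{eq2}, and set $\l := \rho/\int_\O(e^u+e^{-au})\,dx$ so that
$$
-\D u \,=\, \l(e^u - e^{-au}) \text{ in } \O, \qquad u=0 \text{ on } \p\O.
$$
The strategy is to feed a carefully chosen auxiliary pair built from $u$ into the Sphere Covering Inequality (Theorem~\ref{main}) to derive a lower bound on $\rho$ that contradicts $\rho\leq \dfrac{8\pi}{1+a}$.

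Introduce
$$
w_1 \,:=\, -au+\log\l, \qquad w_2 \,:=\, u+\log\l,
$$
so that $e^{w_1}=\l e^{-au}$, $e^{w_2}=\l e^u$, the identity $\int_\O(e^{w_1}+e^{w_2})\,dx = \rho$ holds, and $w_1\equiv w_2 \equiv \log\l$ on $\p\O$. A direct calculation using the PDE for $u$ yields
$$
-\D w_1 \,=\, a(e^{w_1}-e^{w_2}), \qquad -\D w_2 \,=\, e^{w_2}-e^{w_1},
$$
whence the key identity
$$
\D(w_1-w_2) \,=\, (1+a)(e^{w_2}-e^{w_1}).
$$
First assume $u\geq 0$ in $\O$ (the case $u\leq 0$ is handled symmetrically after swapping labels). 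Then $w_1 \leq w_2$, and the pair $(w_1,w_2)$ fits the hypotheses of the SCI; the coefficient $(1+a)$ in the identity for $\D(w_1-w_2)$ rescales the classical $8\pi$ lower bound to $\dfrac{8\pi}{1+a}$. Applying the SCI on the simply connected $\O$ gives
$$
\rho \,=\, \int_\O(e^{w_1}+e^{w_2})\,dx \,\geq\, \frac{8\pi}{1+a},
$$
with strict inequality since $u\not\equiv 0$ forces $w_1\not\equiv w_2$, contradicting the hypothesis.

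In the sign-changing case one decomposes $\O_\pm := \{\pm u >0\}$. A maximum-principle and unique-continuation argument first shows that no nested sign region can exist: on a hypothetical hole of $\O_-$ enclosed in $\O_+$ one has $-\D u \leq 0$ and $u=0$ on its boundary, forcing $u\equiv 0$ on the hole and then globally by unique continuation, against $u\not\equiv 0$. Hence every component of $\O_\pm$ is simply connected, and the SCI may be applied on each: with $(w_1,w_2)$ as above on each component of $\O_+$, and with the ``dual'' pair $(u+\log(a\l),\,-au+\log(a\l))$ on each component of $\O_-$ (noting $u\leq -au$ there). Summing the resulting estimates produces a lower bound on $\rho$ strictly stronger than $\dfrac{8\pi}{1+a}$, again a contradiction. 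The principal obstacle is precisely the sign-changing case: one must both justify the simple-connectedness of the components of $\O_\pm$ before the SCI becomes applicable and track the factor $(1+a)$ carefully through the computations to extract the sharp constant $\dfrac{8\pi}{1+a}$ from the SCI.
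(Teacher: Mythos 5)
Your starting point is the same as the paper's: the pair $v_1=-au+\log\l$, $v_2=u+\log\l$ and the identity $\D(v_2-v_1)+(1+a)(e^{v_2}-e^{v_1})=0$. But two steps are not sound as written. First, you never actually verify the hypotheses of Theorem~\ref{main} for your pair, and with your normalization they fail: one computes $\D v_2+e^{v_2}=e^{v_1}$ and $\D v_1+e^{v_1}=a(e^{v_2}-e^{v_1})+e^{v_1}$, so on a region where $v_2\geq v_1$ the ``wrong'' function carries the larger right-hand side, and the $(1+a)$ does not simply ``rescale'' the $8\pi$ in the conclusion of the SCI. The fix is the explicit shift $w_i=v_i+\log(1+a)$: then $\D(w_2-w_1)+(e^{w_2}-e^{w_1})=0$ and, crucially, $\D w_i+e^{w_i}=e^{v_1}+ae^{v_2}>0$ for \emph{both} $i$, so the SCI hypotheses ($f_2\geq f_1\geq 0$, here with $f_1=f_2\not\equiv 0$) hold and the resulting bound $\int(e^{w_1}+e^{w_2})>8\pi$ translates into $(1+a)\rho>8\pi$. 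This bookkeeping is exactly where the constant $8\pi/(1+a)$ comes from, and it cannot be waved through.

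Second, your treatment of the sign-changing case contains an error and is in any event unnecessary. The maximum principle applied to a nodal region where $u$ is subharmonic (resp.\ superharmonic) with zero boundary values yields only a sign for $u$ there, not $u\equiv 0$; nested nodal domains of $u$ cannot be excluded this way (think of a radial solution negative near the center and positive on an annulus), so the premise of your unique-continuation step is not established and the components of $\{\pm u>0\}$ need not be simply connected. The paper sidesteps all of this: since $\D w_1+e^{w_1}>0$ holds on the \emph{whole} simply connected $\O$, Bol's inequality (Proposition~\ref{bol}) and hence the rearrangement argument behind the SCI apply on \emph{any} subregion of $\O$ regardless of its topology (Remark~\ref{rem:ineq}). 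One therefore picks a single nodal region $\wtilde\O$ of $w_2-w_1=(1+a)u$ with, say, $w_2>w_1$ in $\wtilde\O$ and $w_2=w_1$ on $\p\wtilde\O$, runs the rearrangement of $w_2-w_1$ with respect to $e^{w_1}dx$ and $e^{U_{\l_1}}dx$ there, and obtains $\int_{\wtilde\O}(e^{w_1}+e^{w_2})\,dx\geq 8\pi$, with strict inequality because equality in Bol would force $\D w_1+e^{w_1}=0$. No case analysis on the sign of $u$, no decomposition into $\O_\pm$, and no ``dual pair'' on $\O_-$ is needed.
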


The proof is based on the Sphere Covering Inequality (see Section \ref{ineq} in \cite{gui1} ). Roughly speaking, letting $v_1=u$, $v_2=-au$ we will consider a symmetrization of $v_2-v_1$ with respect to two suitable measures to get the conclusion.
\begin{rem} \label{rem:a}
Let us point out that in equations \eqref{eq1} and \eqref{eq2} we are considering $a<1$ and $a\leq 1$ (respectively) due to the physical motivations. However, we can treat the case $a>1$ as well. More precisely, letting $v=au$ in \eqref{eq1} we may rewrite the latter equation in a form to which we can apply Theorem \ref{thm1} with a new parameter $\wtilde \rho= a \rho$. Therefore, the conclusions of Theorem \ref{thm0} and Corollary \ref{thm1} still hold true for $\rho\leq \dfrac{4\pi}{a}$ and $a>1$. On the other hand, one can easily see from the proof of Theorem \ref{thm2} that the assumption $a\leq 1$ is not needed and we get the same conclusion for $a>1$. 
\end{rem}

\begin{rem}
The same arguments clearly apply to the following version of \eqref{eq:sinh-gordon}:
\begin{equation} \label{eq:no-int}
\left\{ \begin{array}{rll}
-\D u = &  e^{u}+\frac{\a}{|\a|} e^{\a u}  & \mbox{in } \O, \vspace{0.2cm}\\
 u = & g(x)\geq 0 & \mbox{on } \p \O.
\end{array}
\right.
\end{equation}
We have:
\begin{itemize}
	\item[1.] Let $\a=a\in(0,1)$. Suppose $\O \subset \R^2$ is a simply-connected domain and $g\in C(\p\O)$ is a non-negative function. If $u_1$ and $u_2$ are two solutions of \eqref{eq:no-int} such that
	$$
		\int_\O e^{u_1}\,dx =\int_\O e^{u_2}\,dx \leq 4\pi,
	$$
	then $u_1\equiv u_2$.
	
	\medskip
	
	Moreover, suppose that $\Omega$ and $g$  are evenly symmetric about a line. Let $u$ be a solution of \eqref{eq:no-int} Then, $u$ is evenly symmetric about that line. In particular, if $\O$ is radially symmetric and $g$ is a non-negative constant, then $u$ is radially symmetric.
	
	\bigskip
	
	\item[2.] Let $\a=-a$, $a\in(0,1]$. Suppose $\O\subset \R^2$ simply-connected. If $u$ is a solution of \eqref{eq:no-int} such that
		$$
			\int_\O \left( e^{u}+e^{-au} \right)\,dx \leq \frac{8\pi}{1+a},
		$$
	then $u\equiv 0$.
\end{itemize}

\medskip

\noindent Moreover, similar results hold for $a>1$ (see Remark \ref{rem:a}).

\medskip

The above results follow by suitably adapting the proofs of Theorem \ref{thm0}, Corollary~\ref{thm1} and Theorem \ref{thm2} and we omit the details here.
\end{rem}

\medskip
Finally, we have the following remark concerning the sharpness of the above results.
\begin{rem}
Consider for simplicity the standard Sinh-Gordon equation with $\a=-1$ in \eqref{eq:sinh-gordon}. Even though the associated energy functional is coercive for $\rho<8\pi$ (see \cite{ri-ze3}), we can not extend Theorem \ref{thm0}, Corollary \ref{thm1} and Theorem~\ref{thm2} to the range $\rho\leq 8\pi$ (as it holds for the standard mean field equation \eqref{mf}). In \cite{sa-su-ta} (Section 2) the authors provide non-trivial solutions for \eqref{modif} with $\rho<8\pi$. 
\end{rem}

\

\subsection{Cosmic String Equation}

We will next discuss the following problem to which we will refer to as the cosmic string equation:
\begin{equation} \label{eq:string}
\left\{ \begin{array}{rll}
-\D u = & e^{a u} +h(x)\,e^u & \mbox{in } \O, \vspace{0.2cm}\\
 u = & g(x) \geq 0 & \mbox{on } \p \O,
\end{array}
\right.
\end{equation}  
with $a>0$, $0\in\O\subset\R^2$, and $h$ is of the form
\begin{equation} \label{h}
	h(x)=e^{-4\pi N G_0(x)},
\end{equation} 
where $N\in\N$ and $G_0$ is the Green's function with pole at $0$, i.e.
\begin{equation} \label{green}
\left\{ \begin{array}{rll}
-\D G_0(x) = & \d_0 & \mbox{in } \O, \vspace{0.2cm}\\
 G_0(x) = & 0 & \mbox{on } \p \O.
\end{array}
\right.
\end{equation}
Observe that 
$$
	h>0 \quad \mbox{in } \O\setminus\{0\} \qquad \mbox{and} \qquad h(x)\cong |x|^{2N} \quad \mbox{near } 0.
$$
Equation \eqref{eq:string} describes the behavior of selfgravitating cosmic strings for a massive W-boson model coupled with Einstein's equation where $a$ is a physical parameter and $N$ the string's multiplicity (see \cite{a-o, pt, yang}). Observe that for $a=1$ the equation \eqref{eq:string} is also related to the  Gaussian curvature with conic singularities (see \cite{tar} and references therein). 

Many results concerning \eqref{eq:string} have been established especially for the full plane case. We refer to \cite{chae1, chae2, yang} for existence results, to \cite{pt, pt2} for what concerns symmetry issues, and to \cite{tar2} for blow-up analysis. In particular, in \cite{pt, pt2} the authors provide necessary and  sufficient  conditions for the solvability of \eqref{eq:string} in the full plane in the context of radially symmetric solutions, depending on the values of the total mass $\b=\int_{\R^2}\left(  e^{a u} +|{x}|^{2N}e^u \right)\,dx$. For $N\in(-1,0]$ it follows from a moving plane argument that all the solutions to \eqref{eq:string} are radially symmetric, under suitable assumptions on the domain $\O$. However, it remains an open problem if the results in \cite{pt, pt2} are sharp for the non-radial framework. We prove the following result.

\begin{thm} \label{thm3}
Let $\Omega \subset \R^2$ be a simply-connected domain, $a>0$, $N\geq 0$ and $g\in C(\partial \Omega)$ be non-negative. Suppose $u_1$ and $u_2$ are two distinct solutions of \eqref{eq:string} such that 

\begin{equation}\label{cosmicEnergyBound}
\left\{ \begin{array}{rll}
&\int_\O \left(e^{au_1}+ e^{au_2}\right) \,dx \leq \dfrac{8\pi}{a} \quad & \mbox{if } \quad a\geq 1,\\
&\int_\O \left(e^{u_1}+ e^{u_2}\right) \,dx \leq 8\pi \qquad  & \mbox{if } \quad a<1
\end{array}
\right.
\end{equation}
Then $u_1$ and $u_2$ can not intersect, i.e. either 
\begin{equation}\label{CosmicCanNotIntersect}
u_2>u_1 \qquad \hbox{or} \qquad u_2<u_1 \qquad \hbox{in} \ \ \Omega. 
\end{equation}
\end{thm}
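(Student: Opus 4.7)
The plan is to argue by contradiction and apply the Sphere Covering Inequality (SCI) from \cite{gui1} on \emph{both} sides of the nodal set of $u_1-u_2$, producing a total Liouville energy of at least $16\pi$ and contradicting \eqref{cosmicEnergyBound}.

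First I would record two preliminary facts. Since $-\Delta u_i = e^{au_i}+h(x)e^{u_i}>0$ in $\Omega$ and $u_i=g\geq 0$ on $\partial\Omega$, the maximum principle yields $u_i\geq 0$ in $\Omega$. Similarly, since $-\Delta G_0=\delta_0\geq 0$ with $G_0|_{\partial\Omega}=0$, one has $G_0\geq 0$ in $\Omega$, hence $h(x)=e^{-4\pi N G_0(x)}\leq 1$. Now assume for contradiction that $u_1-u_2$ changes sign. Since $u_1=u_2=g$ on $\partial\Omega$, both $\{u_1>u_2\}$ and $\{u_1<u_2\}$ are nonempty open sets; I select connected components $\omega\subset\{u_1>u_2\}$ and $\omega'\subset\{u_1<u_2\}$, which can be arranged to be simply-connected by exploiting the simple-connectedness of $\Omega$ (passing to nested subcomponents if necessary). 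On each, $u_1=u_2$ on the boundary.

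For the range $a\geq 1$, I would set $v_i:=au_i$ and compute
\[
-\Delta v_i \;=\; a\,e^{v_i}+a\,h(x)\,e^{v_i/a}\;\geq\;e^{v_i}\quad\text{in }\Omega,
\]
using $a\geq 1$ and $h\geq 0$. Thus $v_1,v_2$ are Liouville supersolutions with $v_1>v_2$ in $\omega$ and $v_1=v_2$ on $\partial\omega$, so SCI gives $\int_\omega(e^{au_1}+e^{au_2})\,dx\geq 8\pi$, and identically on $\omega'$. Summing, $\int_\Omega(e^{au_1}+e^{au_2})\,dx\geq 16\pi$, contradicting \eqref{cosmicEnergyBound} because $8\pi/a\leq 8\pi<16\pi$.

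For $a<1$ I would absorb the weight $h$ by setting $W_i:=u_i+\log h(x)=u_i-4\pi N G_0(x)$. Since $G_0$ is harmonic in $\Omega\setminus\{0\}$,
\[
-\Delta W_i\;=\;e^{au_i}+h(x)\,e^{u_i}\;\geq\;h(x)\,e^{u_i}\;=\;e^{W_i}\quad\text{in }\Omega\setminus\{0\},
\]
so each $W_i$ is a Liouville supersolution away from the origin. The singular parts of $W_1$ and $W_2$ at $0$ are identical (both equal $2N\log|x|$ plus a smooth term), so $W_1-W_2=u_1-u_2$ is smooth and has the same nodal structure as $u_1-u_2$. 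Applying SCI on $\omega$ and $\omega'$ yields $\int_{\omega}h\,(e^{u_1}+e^{u_2})\,dx\geq 8\pi$ and the analogous bound on $\omega'$; using $h\leq 1$ and summing gives $\int_\Omega(e^{u_1}+e^{u_2})\,dx\geq 16\pi$, contradicting \eqref{cosmicEnergyBound} in this case as well.

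The main obstacle I anticipate is the rigorous invocation of SCI for the case $a<1$ when $N\geq 1$: although $W_1-W_2$ is smooth, each $W_i$ tends to $-\infty$ at $0$, so one needs either a singular (conic) version of SCI or a careful approximation argument on $\omega\setminus B_\varepsilon(0)$ where the boundary contributions along $\partial B_\varepsilon(0)$ vanish as $\varepsilon\to 0$ thanks to the cancellation of the equal logarithmic singularities of $W_1$ and $W_2$. A secondary, more routine point is choosing $\omega,\omega'$ to be simply-connected, which is handled by a standard filling-in argument based on the simple-connectedness of $\Omega$.
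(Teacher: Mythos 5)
There is a genuine gap, and it is the central step: the differential inequality you derive points the wrong way for the Sphere Covering Inequality. Both Bol's inequality and the SCI (Theorem~\ref{main}) require \emph{subsolutions} of the Liouville equation, i.e. $\D w_i + e^{w_i} = f_i$ with $f_2\geq f_1\geq 0$. Your functions $v_i=au_i$ satisfy $-\D v_i = a e^{v_i}+a h e^{v_i/a}\geq e^{v_i}$, which is $\D v_i+e^{v_i}\leq 0$ --- a \emph{supersolution} --- and the same happens for $W_i=u_i+\log h$ in the case $a<1$. The SCI simply does not apply to such functions (Bol's isoperimetric inequality fails when the conformal curvature is only bounded from below), so the conclusion $\int_\omega(e^{v_1}+e^{v_2})\geq 8\pi$ on each nodal region is unjustified. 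A telltale symptom is that your argument, if valid, would prove the theorem under the much weaker hypothesis $\int_\O(e^{au_1}+e^{au_2})<16\pi$; the actual threshold $8\pi/a$ never enters your computation.

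The missing idea is a normalization that flips the sign. The paper writes the equation as $\D u + 2e^{au}= e^{au}-h(x)e^{u}$ and sets $v=au+\log(2a)$, so that $e^{v}=2a\,e^{au}$ and
\[
\D v + e^{v} \;=\; a\bigl(e^{au}-h(x)e^{u}\bigr)=:f(u)\;\geq\;0
\]
for $a\geq 1$, using $u>0$ (maximum principle) and $h\leq 1$; moreover $f(u_1)>f(u_2)$ on $\{u_1>u_2\}$ and vice versa, which is the ordering $f_2\geq f_1\geq 0$ that the SCI also needs and that your supersolution framing cannot supply. Applying the SCI on both nodal regions then gives $2a\int_\O(e^{au_1}+e^{au_2})>16\pi$, i.e. exactly the bound $8\pi/a$. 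For $a<1$ the analogous splitting is $\D u + 2e^{u}=(e^{u}-e^{au})+(1-h(x))e^{u}\geq 0$ with $v=u+\log 2$, which also sidesteps entirely your secondary difficulty with the logarithmic singularity of $W_i$ at the origin when $N\geq 1$: the weight $h$ stays on the right-hand side and is only used via $h\leq 1$. (Your worry about making the nodal components simply-connected is unnecessary: by Remark~\ref{rem:ineq}, once $\D w+e^{w}\geq 0$ holds on the simply-connected $\O$, the SCI applies on any subregion.)
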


\begin{cor} \label{thm03}
Let $\Omega \subset \R^2$ be a simply-connected domain, $a>0$, $N\geq 0$ and $g\in C(\partial \Omega)$ be non-negative.
 Assume 
\begin{equation}\label{cosmicEnergyBound0}
\gamma:=
\left\{ \begin{array}{rll}
&\int_\O e^{au}\,dx \leq \dfrac{4\pi}{a} \qquad &\mbox{if }  \quad a \geq 1,\\
&\int_\O e^{u}\,dx \leq 4\pi \qquad &\mbox{if } \quad a<1.
\end{array}
\right.
\end{equation}
Then \eqref{eq:string}  has a unique solution $u$ for any $\gamma$ satisfying \eqref{cosmicEnergyBound0}. 
In particular,  if $0 \in \O $ and $\O, g$ are evenly symmetric about a line passing through the origin,  then  $u$ is evenly symmetric about that line.
Consequently,  if $\O$ is radially symmetric about the origin and $g$ is a non-negative constant, then $u$ is radially symmetric about the origin.
\end{cor}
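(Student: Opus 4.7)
The plan is to obtain uniqueness as an immediate consequence of Theorem \ref{thm3} by comparing the prescribed mass $\gamma$ of two competing solutions, and then to derive the reflective and radial symmetry from this uniqueness by the classical trick of reflecting a solution across a line of symmetry.

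For the uniqueness step, I would take two solutions $u_1,u_2$ of \eqref{eq:string} having the same mass $\gamma$ satisfying \eqref{cosmicEnergyBound0}. Adding the two mass identities doubles $\gamma$, which still respects the threshold in \eqref{cosmicEnergyBound} ($8\pi/a$ for $a\geq 1$ and $8\pi$ for $a<1$). Theorem \ref{thm3} then applies and yields the dichotomy: either $u_1\equiv u_2$, or $u_1$ and $u_2$ never intersect in $\Omega$. In the latter case, say $u_2>u_1$ throughout $\Omega$, the strict monotonicity of the exponential gives $\int_\Omega e^{au_2}\,dx>\int_\Omega e^{au_1}\,dx$ (respectively $\int_\Omega e^{u_2}\,dx>\int_\Omega e^{u_1}\,dx$ when $a<1$), contradicting the equality of the masses. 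Hence $u_1\equiv u_2$.

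For the reflective symmetry, I would let $\sigma$ denote the reflection across a line $\ell$ through the origin about which $\Omega$ and $g$ are assumed to be invariant. Since $\sigma(0)=0$ and $\sigma(\Omega)=\Omega$, uniqueness of the solution to \eqref{green} gives $G_0\circ\sigma=G_0$, so the weight $h$ in \eqref{h} is also $\sigma$-invariant. Then $\tilde u:=u\circ\sigma$ is again a solution of \eqref{eq:string} with the same boundary data and, by a change of variables, the same mass $\gamma$ as $u$. Applying the uniqueness from the previous step forces $\tilde u=u$, i.e. $u$ is symmetric about $\ell$. When $\Omega$ is radially symmetric about the origin and $g$ is a non-negative constant, every line through the origin is a line of symmetry for $\Omega$ and $g$, so $u$ is invariant under every reflection across a line through the origin, and is therefore radial.

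The main obstacle is Theorem \ref{thm3} itself, which the argument takes as a black box; once it is available the corollary requires only a monotonicity comparison of masses and the standard reflection trick. The only other point worth checking is that the weight $h$ inherits the reflective symmetry of $\Omega$, which follows from the uniqueness of the Green's function together with the assumption that the pole of $G_0$ (namely the origin) lies on the axis of symmetry and is therefore fixed by $\sigma$.
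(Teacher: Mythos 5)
Your proposal is correct and follows essentially the same route as the paper: uniqueness for fixed mass $\gamma$ via Theorem \ref{thm3} (two distinct non-intersecting solutions would have different masses), and symmetry by applying this to $u$ and its reflection $u\circ\sigma$, using that $G_0$ (hence $h$) is $\sigma$-invariant because the pole lies on the axis. The only cosmetic difference is that the paper applies Theorem \ref{thm3} directly to the pair $(u,u\circ\sigma)$ — which intersect along the axis — to reach the contradiction $2\int_\O e^{au}\,dx>8\pi/a$, whereas you route the symmetry step through the uniqueness statement; the two are equivalent.
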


The proof is based on a simple manipulation of equation \eqref{eq:string} and the Sphere Covering Inequality  (see Theorem~\ref{main} below or  \cite{gui1}).

\begin{rem}
 Theorem \ref{thm3} and Corollary \ref{thm03} can be generalized for the following more general equation (we refer to \cite{pt2} for applications of this equation)
$$
\left\{ \begin{array}{rll}
-\D u = & \dis{\sum_{i=0}^m h_i(x)\,e^{a_i u}} & \mbox{in } \O, \vspace{0.2cm}\\
 u = & g(x) \geq 0 & \mbox{on } \p \O,
\end{array}
\right.
$$
where $a_i>0$ and
$$
	h_i(x)= e^{-4\pi N_i G_0(x)}, 
$$
with $N_i\geq 0$ for all $i$. Let $a_M= \max_i\{a_i\}$.  Using similar arguments  as in the proofs of Theorem \ref{thm3},  one can check the assumptions \eqref{cosmicEnergyBound} and \eqref{cosmicEnergyBound0} (where $m=1$) should be replaced by
$$
\int_\O \left(e^{a_M u_1}+ e^{a_M u_2}\right) \,dx \leq \dfrac{16\pi}{a_M (m+1)},
$$
and
$$
\int_\O e^{a_M u}\,dx \leq \dfrac{8\pi}{a_M(m+1)},
$$
respectively. 
\end{rem}

\

\subsection{Liouville-Type Systems}

We also study the following class of Liouville-type systems:
\begin{equation} \label{eq:toda}
\left\{ \begin{array}{ll}
 			\begin{array}{ll}
 			-\D u_1 = & Ae^{u_1} - Be^{u_2} \vspace{0.2cm} \\
 			-\D u_2 = & B'e^{u_2} - A'e^{u_1}
 			\end{array}  & \mbox{in }  \O, \vspace{0.2cm} \\
\ \: u_1=u_2 = g(x) & \mbox{on } \p \O,
\end{array}
\right.
\end{equation} 
with $g\in C(\p\O)$ and 
\begin{equation} \label{cond}
	A, A', B, B' \geq 0, \qquad A+A'=B+B':=M>0.
\end{equation}
Observe that we allow some of the above coefficients to be zero.

The latter system is deeply connected both with geometry and mathematical physics. For example, by taking $A=B'=2$, $B=A'=1$ we recover the $2\times 2$ Toda system which has been extensively studied in the literature. This equation appears in the description of holomorphic curves in $\mathbb{C}\P^N$ (see \cite{bw, ca, lwy}). It also arises in the non-abelian Chern-Simons theory in the context of high critical temperature superconductivity (see \cite{dunne, tar3, yang}). The case $A=B'=1$ and $B=A'=\t$ with a singular source was considered in \cite{pt3} in unbounded domains.

For what concerns Toda-type systems we refer to \cite{jlw, lwyang, lwyz} for blow-up analysis, to \cite{lwy} for classification issues, and to \cite{bjmr, jkm, mr} for existence results. On the other hand, we are not aware of  any symmetry or uniqueness results for Liouville-type systems alike \eqref{eq:toda}. In this direction we provide the following result.

\medskip

\begin{thm} \label{thm4}
Let $(u_1, u_2)$ be a solution of \eqref{eq:toda} and \eqref{cond}. Let $M$ be as defined in \eqref{cond}. Suppose that $\O$ is simply-connected and 
$$
 \int_\O \left( e^{u_1}+e^{u_2} \right)\,dx \leq \frac{8\pi}{M}.
$$ 
Then $u_1\equiv u_2\equiv u$, where $u$ is the unique solution to 
$$
	\left\{ \begin{array}{rll}
-\D u = & D e^{u} & \mbox{in } \O, \vspace{0.2cm}\\
 u = & g(x) & \mbox{on } \p \O,
\end{array}
\right.
$$
and $D:= A-B=B'-A'$. 
\end{thm}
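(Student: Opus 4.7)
The plan is to exploit the constraint $A+A'=B+B'=M$ to put both equations of the system~\eqref{eq:toda} into a Sphere Covering form sharing a \emph{common} defect, so that the SCI applies directly to the pair $(u_1, u_2)$. To this end, I set $v_i := u_i + \log M$, so that $e^{v_i} = M\,e^{u_i}$ and the mass assumption reads $\int_\O (e^{v_1}+e^{v_2})\,dx \leq 8\pi$. Writing $\alpha := A/M$, $\alpha' := A'/M$, $\beta := B/M$, $\beta' := B'/M$, the hypothesis~\eqref{cond} becomes $\alpha+\alpha' = \beta+\beta' = 1$, and the system reads
\begin{align*}
 -\D v_1 &= \alpha\,e^{v_1} - \beta\,e^{v_2}, \\
 -\D v_2 &= \beta'\,e^{v_2} - \alpha'\,e^{v_1}.
\end{align*}
Writing each equation as $-\D v_i = e^{v_i} - f_i$, a direct computation yields
\[ f_1 = (1-\alpha)\,e^{v_1} + \beta\,e^{v_2} = \alpha'\,e^{v_1} + \beta\,e^{v_2} = \alpha'\,e^{v_1} + (1-\beta')\,e^{v_2} = f_2, \]
so $f_1 \equiv f_2 =: f \geq 0$. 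This algebraic ``miracle'' is the heart of the argument: the condition $A+A'=B+B'$ is precisely what makes both $v_1$ and $v_2$ sub-solutions of the Liouville equation $-\D v = e^v$ with the \emph{same} defect~$f$.

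Assume for contradiction that $u_1 \not\equiv u_2$, and define the open sets $\O_\pm := \{\pm(u_1-u_2) > 0\}$; at least one of them is non-empty. On each connected component $\mathcal{U}$ of $\O_\pm$, the functions $v_1$ and $v_2$ agree on $\p\mathcal{U}$ (by continuity at interior nodal points, or via the common boundary datum $g$ on $\p\O$), are strictly ordered in $\mathcal{U}$, and both satisfy $-\D v_i = e^{v_i} - f$ with $f \geq 0$ and identical defects. The Sphere Covering Inequality of Gui--Moradifam therefore yields
\[ \int_\mathcal{U} (e^{v_1}+e^{v_2})\,dx \geq 8\pi. \]
If both $\O_+$ and $\O_-$ were non-empty, summing over their components would give $\int_\O(e^{v_1}+e^{v_2}) \geq 16\pi$, contradicting the mass bound. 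Hence exactly one of them, say $\O_+$, is non-empty, so $u_1 \geq u_2$ in $\O$; the SCI on $\O_+$ combined with the mass bound then forces equality, and the equality case of the SCI forces $f \equiv 0$. Since $e^{v_i}>0$, this entails $\alpha' = \beta = 0$, i.e., $A' = B = 0$ and $A = B' = M$. In this decoupled scenario the system becomes two copies of $-\D u_i = M\,e^{u_i}$ with common boundary datum~$g$, and one further invocation of the SCI (now with $f_i \equiv 0$) under the subcritical mass bound forces $u_1 \equiv u_2$, contradicting our assumption.

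Having established $u_1 \equiv u_2 =: u$, either equation in \eqref{eq:toda} reduces to $-\D u = (A-B)\,e^u = D\,e^u$ with $u = g$ on $\p\O$. Since $|D| \leq M$ and $M\int_\O e^u \leq 4\pi$, we have $|D|\int_\O e^u \leq 4\pi < 8\pi$, putting the scalar mean-field problem in the subcritical regime where $u$ is uniquely determined. The decisive step is the identity $f_1 \equiv f_2$ derived above; the main technical subtlety is the SCI equality case, which fortunately collapses onto the decoupled mean-field setting where uniqueness is once more recovered by the Sphere Covering Inequality.
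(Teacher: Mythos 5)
Your main line of argument is correct and in fact streamlines the paper's proof. The key identity you isolate, namely that with $v_i = u_i + \log M$ both equations take the form $\D v_i + e^{v_i} = f$ with the \emph{common} defect $f = \tfrac{A'}{M}e^{v_1} + \tfrac{B}{M}e^{v_2} \geq 0$, is exactly what the constraint $A+A'=B+B'$ encodes, and it lets you quote the Sphere Covering Inequality (Theorem \ref{main}, together with Remark \ref{rem:ineq} for the nodal subregions) as a black box. The paper instead re-runs the rearrangement machinery of Section \ref{ineq}: it only records the sub-solution property $\D w_1 + e^{w_1} \geq 0$ for the smaller function and couples it with the exact equation $\D(w_2-w_1) + e^{w_2}-e^{w_1} = 0$. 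Your observation that the full hypotheses of Theorem \ref{main} (with $f_1 = f_2$) are actually met makes the argument shorter and more transparent.

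The gap is in your treatment of the equality case. Once you conclude $f \equiv 0$, i.e. $A' = B = 0$, the system decouples into two copies of $-\D u_i = Me^{u_i}$ and the available mass is \emph{exactly} $\int_\O (e^{v_1}+e^{v_2})\,dx = 8\pi$, not subcritical; the SCI with $f_i \equiv 0$ is sharp and non-strict there, so it cannot force $u_1 \equiv u_2$. Indeed no argument can: taking $\O = B_1(0)$ and $u_i = U_{\l_i} - \log M$ with $\l_1\l_2 = 8$, $\l_1 \neq \l_2$ (so that $U_{\l_1} = U_{\l_2}$ on $\p B_1(0)$ and, by Proposition \ref{comp}, $\int_{B_1(0)}\bigl(e^{U_{\l_1}}+e^{U_{\l_2}}\bigr)\,dx = 8\pi$) produces two distinct solutions of the decoupled system with a common constant boundary value and total mass exactly $8\pi/M$. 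So the statement genuinely fails at the threshold when $A'=B=0$, and your last step cannot be repaired; one must either assume strict inequality in the mass bound or exclude the fully decoupled case. Be aware that the paper's own proof quietly suffers from the same defect: its strictness claim is borrowed from the proof of Theorem \ref{thm2}, where it rests on $\D w_1 + e^{w_1} > 0$, whereas here \eqref{pos'} only gives $\geq 0$ and degenerates to an identity precisely when $A'=B=0$. Away from this borderline configuration (i.e. when $A'+B>0$, or when the mass inequality is strict) your proof is complete and correct.
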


\begin{rem}
For Toda-type systems where $A=B'=2$, $B=A'=1$, the above result asserts that if  $\O$ is simply-connected and 
$$
 \int_\O \left( e^{u_1}+e^{u_2} \right)\,dx \leq \frac{8\pi}{3},
$$ 
 then $u_1\equiv u_2\equiv u$, where $u$ is the unique solution to 
$$
	\left\{ \begin{array}{rll}
-\D u = & e^{u} & \mbox{in } \O, \vspace{0.2cm}\\
 u = & g(x) & \mbox{on } \p \O.
\end{array}
\right.
$$
\end{rem}

Arguing as in the proof of the Sphere Covering Inequality (see Section \ref{ineq} below or \cite{gui1}), we will consider a symmetrization of $u_2-u_1$ with respect to two suitable measures to get the latter result. The uniqueness property will then follow by applying the Sphere Covering inequality to the scalar equation.

\medskip

A similar argument can be carried out for the following singular version of \eqref{eq:toda}:
\begin{equation} \label{eq:toda-sing}
\left\{ \begin{array}{ll}
 			\begin{array}{ll}
 			-\D u_1 = & Ae^{u_1} - Be^{u_2} - 4\pi\a \d_0\vspace{0.2cm} \\
 			-\D u_2 = & B'e^{u_2} - A'e^{u_1} -4\pi \a \d_0
 			\end{array}  & \mbox{in }  \O, \vspace{0.2cm} \\
\ \: u_1=u_2 = g(x) & \mbox{on } \p \O,
\end{array}
\right.
\end{equation}
where $\a\geq 0$ and $0\in\O$. Recall the definitions of $M$, $D$ in \eqref{cond} and in Theorem~ \ref{thm4}, respectively. By using the Green's function $G_0$ with pole at  $0$ as in \eqref{green} we may consider
\begin{equation} \label{wtilde u}
\wtilde u_i(x) = u(x) +4\pi\a G_0(x)
\end{equation}
which satisfies 
$$ 
\left\{ \begin{array}{ll}
 			\begin{array}{ll}
 			-\D \wtilde u_1 = & Ah(x)e^{\wtilde u_1} - Bh(x)e^{\wtilde u_2} \vspace{0.2cm} \\
 			-\D \wtilde u_2 = & B'h(x)e^{\wtilde u_2} - A'h(x)e^{\wtilde u_1} 
 			\end{array}  & \mbox{in }  \O, \vspace{0.2cm} \\
\ \: \wtilde u_1=\wtilde u_2 = g(x) & \mbox{on } \p \O,
\end{array}
\right.
$$
with	$h(x)=e^{-4\pi \a G_0(x)}$. We have the following result.
\begin{thm} \label{thm5}
Let $(u_1, u_2)$ be a solution of \eqref{eq:toda-sing} with $\a\geq 0$ and \eqref{cond}. Let $\wtilde u_i$ be as in \eqref{wtilde u}. Suppose 
 $\O$ is simply-connected and
$$
 \int_\O \left( e^{\wtilde u_1}+e^{\wtilde u_2} \right)\,dx \leq \frac{8\pi}{M}
$$ 
 Then $u_1\equiv u_2\equiv u$, where $u$ is the unique solution to
$$
	\left\{ \begin{array}{rll}
-\D u = & D e^{u} -4\pi\a\d_0 & \mbox{in } \O, \vspace{0.2cm}\\
 u = & g(x) & \mbox{on } \p \O.
\end{array}
\right.
$$
\end{thm}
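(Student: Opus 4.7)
The plan is to reduce Theorem \ref{thm5} to the argument of Theorem \ref{thm4}, after absorbing the Dirac sources into the non-negative weight $h(x)=e^{-4\pi\alpha G_0(x)}$.

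First, I would exploit the substitution $\wtilde u_i = u_i + 4\pi\alpha G_0$ already recorded in the excerpt: the pair $(\wtilde u_1,\wtilde u_2)$ satisfies the regularized weighted system displayed just before the statement, with boundary values $g$. Since $G_0$ is positive in $\O$ and has a logarithmic singularity at the origin, the weight $h$ is smooth in $\O\setminus\{0\}$, bounded above by $1$ on $\ov\O$, and behaves like $|x|^{2\alpha}$ near $0$; the singularities of $u_1,u_2$ are cancelled by the $4\pi\alpha G_0$ term, so the $\wtilde u_i$ are bounded on $\ov\O$.

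Next, following the hint given in the paper for Theorem \ref{thm4}, set $w = \wtilde u_2 - \wtilde u_1$. The balance condition $A+A'=B+B'=M$ yields
\[
-\Delta w \;=\; M\,h(x)\bigl(e^{\wtilde u_2}-e^{\wtilde u_1}\bigr)\quad\text{in }\O,\qquad w=0\ \text{on }\p\O,
\]
so $-\Delta w$ shares the sign of $w$ pointwise. Consequently $w$ has a definite sign on each component of $\{w\ne 0\}$, where the appropriate one of $\pm w$ is a nonnegative function with $-\Delta(\pm w)\ge 0$, which is the proper setting for a rearrangement argument.

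Assuming for contradiction $w\not\equiv 0$, I would run the Sphere Covering argument of \cite{gui1} exactly as the paper proposes for Theorem \ref{thm4}: introduce the nodal sets $\O_\pm := \{\pm w>0\}$ and symmetrize $\wtilde u_1|_{\O_+}$ and $\wtilde u_2|_{\O_-}$ against the weighted measures $h\,e^{\wtilde u_1}\,dx$ and $h\,e^{\wtilde u_2}\,dx$, respectively. The weighted Sphere Covering Inequality would then produce a sharp lower bound of the form
\[
M\Bigl(\int_{\O_+}e^{\wtilde u_1}\,dx+\int_{\O_-}e^{\wtilde u_2}\,dx\Bigr)\;\ge\;8\pi,
\]
which contradicts the standing hypothesis $\int_\O(e^{\wtilde u_1}+e^{\wtilde u_2})\,dx\le 8\pi/M$ unless $w\equiv 0$. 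Hence $\wtilde u_1\equiv \wtilde u_2 =: \wtilde u$; inserting this back in either equation of the weighted system yields $-\Delta \wtilde u = D\,h(x)\,e^{\wtilde u}$ with $D = A-B = B'-A'$, so $u := \wtilde u - 4\pi\alpha G_0$ solves the singular scalar equation claimed in the theorem with boundary value $g$. Uniqueness of such $u$ under the assumed mass bound then follows from the Sphere Covering Inequality applied to the scalar singular equation, in the spirit of Corollary \ref{thm03}.

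The main obstacle is carrying out the symmetrization step rigorously in the presence of the weight $h$, which vanishes at a single interior point. One has to verify that the isoperimetric core of the SCI preserves the sharp constant $8\pi$ against the degenerate non-negative measure $h\,dx$. This runs in complete parallel to the handling of the weight in the cosmic string problem (Theorem \ref{thm3}), so the adaptation should be of a technical rather than conceptual nature.
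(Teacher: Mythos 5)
Your overall skeleton (desingularization via $\wtilde u_i = u_i + 4\pi\a G_0$, contradiction on a nodal region, rearrangement, reduction to the scalar singular equation) matches the paper's, but the step you yourself flag as the main obstacle --- a ``weighted Sphere Covering Inequality'' with sharp constant $8\pi$ for measures of the form $h\,e^{w}\,dx$ with $h(x)\cong|x|^{2\a}$ vanishing at an interior point --- is a genuine gap, not a routine technicality. Bol's inequality and Proposition \ref{comp} are the analytic core of the SCI, and their sharp constant is sensitive to a degenerate conformal factor (for singular weights of this type the relevant isoperimetric constant is known to depend on $\a$; cf.\ \cite{bar-lin2}); neither the paper nor your sketch establishes the weighted version you invoke. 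Your appeal to Theorem \ref{thm3} as a precedent is also misleading: there, too, the weight is disposed of through the elementary bound $h\le 1$ \emph{before} any isoperimetric input is used, not by a weighted SCI.

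The paper sidesteps the issue entirely via the observation you record but never deploy, namely $h\le 1$. On a region $\wtilde\O$ where (say) $\wtilde u_2>\wtilde u_1$ one has $e^{\wtilde u_2}-e^{\wtilde u_1}>0$, hence
$$
\D(\wtilde u_2-\wtilde u_1)+M\bigl(e^{\wtilde u_2}-e^{\wtilde u_1}\bigr)\;\ge\;\D(\wtilde u_2-\wtilde u_1)+M h(x)\bigl(e^{\wtilde u_2}-e^{\wtilde u_1}\bigr)=0,
$$
and similarly $\D w_1+e^{w_1}\ge 0$ for $w_1=\wtilde u_1+\log M$, since $\D\wtilde u_1+Ahe^{\wtilde u_1}=Bhe^{\wtilde u_2}\ge0$ and $Ah\le A\le M$. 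These two \emph{unweighted} differential inequalities are exactly what the proof of Theorem \ref{thm4} consumes: the level-set identity there only needs to become an inequality in the favorable direction, so Propositions \ref{bol}--\ref{comp} apply verbatim and yield $M\int_{\wtilde\O}\bigl(e^{\wtilde u_1}+e^{\wtilde u_2}\bigr)\,dx\ge 8\pi$, contradicting the hypothesis. If you replace your weighted-SCI step by this comparison, the rest of your argument (the reduction to $-\D\wtilde u=Dh(x)e^{\wtilde u}$ and the uniqueness of the scalar solution, again using $h\le 1$) goes through as in Theorem \ref{thm4}.
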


\medskip

The next remark concerns a possible generalization of the results we have obtained so far for multiply-connected domains.
\begin{rem}
All the  previous results  hold for multiply-connected domains with constant boundary condition, i.e. $g(x)=c \in \R$. This follows from the same arguments and the Sphere Covering Inequality (Theorem~\ref{main}) for multiply-connected domains. See Remark \ref{rem:ineq} below.
\end{rem}

\

The paper is organized as follows. In Section \ref{ineq} we recall the main ingredients of the Sphere Covering Inequality. In Section \ref{sinh-gordon} we present our strategy for proving the uniqueness result of Theorem \ref{thm0}, the symmetry result of Corollary \ref{thm1}, and the uniqueness result of Theorem~\ref{thm2}. In Section \ref{string} we show how to get the no intersection property of Theorem \ref{thm3} and the symmetry property of Corollary \ref{thm03}. In Section~\ref{toda} we provide the proof of the uniqueness result inTheorems \ref{thm4} and \ref{thm5}.

\

\

\begin{center}
\textbf{Notation}
\end{center}

The symbol $B_r(p)$ will denote the open metric ball of radius $r$ and center $p$. Where there is no ambiguity, with a little abuse of notation we will write $x$ and $dx$ to denote $(x,y)\in\R^2$ and the integration with respect to $(x,y)$, respectively. 

\

\section{The Sphere Covering Inequality} \label{ineq}

\medskip

In this section we recall the main ingredients of the Sphere Covering Inequality proved in \cite{gui1} as we will need them in the sequel. Roughly speaking, the latter result asserts that the total area of two distinct surfaces with Gaussian curvature equal to $1$, conformal to the Euclidean unit disk with the same conformal factor on the boundary, must cover the whole unit sphere after a proper rearrangement. See \cite{gui1} for more details. Let us start by recalling the standard Bol's isoperimetric inequality as in \cite{bol}.
\begin{pro} \label{bol}
Let $\O\subset\R^2$ be a simply-connected set and $u\in C^2(\O)$ be such that
$$
	\D u+e^u \geq 0 \qquad \mbox{and} \qquad \int_\O e^u\,dx \leq 8\pi.
$$
Then, for any $\omega\subset\subset\O$ of class $C^1$ it holds
$$
	\left( \int_{\p\omega}e^{\frac{u}{2}}\,d\s \right)^2 \geq \frac 12 \left( \int_\omega e^u\,dx \right)\left( 8\pi-\int_\omega e^u\,dx \right).
$$
\end{pro}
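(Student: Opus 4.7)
The plan is to read the inequality as a geometric statement about the conformal metric $g = e^u\,|dx|^2$ on $\Omega$. A direct computation shows that the Gauss curvature of $g$ is $K_g = -\tfrac{1}{2}e^{-u}\Delta u$, so the hypothesis $\Delta u + e^u \geq 0$ is exactly $K_g \leq \tfrac{1}{2}$, while $\int_\Omega e^u\,dx \leq 8\pi$ says the total $g$-area is at most that of a round sphere of curvature $\tfrac{1}{2}$. The claimed inequality becomes the classical Bol isoperimetric inequality on such a surface, and for a round cap it is an equality. I would follow the approach via inner parallel sets.

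First I would reduce to the case of a connected, simply-connected $\omega$ with $C^2$ boundary and smooth $u$ (the general case then follows by standard approximation). Let $d_g(x)=\mathrm{dist}_g(x,\p\omega)$, set $\omega_t := \{x \in \omega : d_g(x) > t\}$ for $t \in [0,T)$, and define
\begin{equation*}
A(t) := \int_{\omega_t} e^u\,dx,\qquad L(t) := \int_{\p\omega_t} e^{u/2}\,d\sigma.
\end{equation*}
The coarea formula gives $A'(t) = -L(t)$. For a.e.\ $t$ the level set $\p\omega_t$ is smooth, and the first variation of arclength along equidistants yields $L'(t) = -\int_{\p\omega_t} k_g\,d\sigma_g$, where $k_g$ denotes geodesic curvature in $g$. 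Gauss--Bonnet applied to the simply connected region $\omega_t$ gives $\int_{\p\omega_t} k_g\,d\sigma_g = 2\pi - \int_{\omega_t} K_g\,dA_g$, and combining with $K_g \leq \tfrac12$ yields
\begin{equation*}
L'(t) \leq -2\pi + \tfrac{1}{2}A(t).
\end{equation*}

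The key step is then to check that the functional
\begin{equation*}
\Phi(t) := L(t)^2 - \tfrac{1}{2}A(t)\bigl(8\pi - A(t)\bigr)
\end{equation*}
is non-increasing. Indeed, using $A' = -L$ and the inequality for $L'$,
\begin{equation*}
\Phi'(t) = 2L L' - \tfrac{1}{2}(8\pi - 2A)A' = L\bigl[2L' + 4\pi - A\bigr] \leq 0.
\end{equation*}
As $t \to T^{-}$ the set $\omega_t$ shrinks to the cut locus, so $A(t),L(t) \to 0$ and $\Phi(T^-) = 0$; hence $\Phi(0) \geq 0$, which is exactly the claimed inequality. The extremal sphere of curvature $\tfrac12$ and area $8\pi$ achieves equality.

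The main obstacle is the cut locus of $d_g$: as $t$ grows, $\omega_t$ may change topology (split into several components, develop handles, or lose simple-connectivity), so the naive application of Gauss--Bonnet breaks down on a measure-zero set of times. I would handle this in the classical way, by noting that at each topological transition the Euler characteristic of the region in play can only decrease, so the one-sided jump of $L'$ is non-positive, and the monotonicity of $\Phi$ is preserved across the singular set. Alternatively, one can approximate $u$ by strictly subharmonic-type perturbations so that the level sets of $d_g$ stay smooth and simply connected away from a sparse set of $t$'s, then pass to the limit. Everything else in the proof is a direct ODE comparison.
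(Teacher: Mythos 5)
The paper does not actually prove Proposition \ref{bol}: it is recalled as the classical Alexandrov--Bol inequality with a citation to \cite{bol} (see also \cite{bandle} and \cite{gui1}), so there is no in-paper argument to compare against. On its own merits, your proposal is the classical Fiala--Bol method of interior parallels, and the core computation is correct: for $g=e^u|dx|^2$ one has $K_g\le \tfrac12$, $A'=-L$, and, \emph{when $\omega_t$ is connected and simply connected}, Gauss--Bonnet gives $L'\le -2\pi+\tfrac12 A$, whence $\Phi'=L(2L'+4\pi-A)\le 0$.

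There are, however, two genuine gaps. First, the proposition is stated for an arbitrary $C^1$ region $\omega\subset\subset\O$, and your reduction to simply connected $\omega$ ``by standard approximation'' is not available as stated: since $A\mapsto \tfrac12 A(8\pi-A)$ is not monotone, filling in the holes of $\omega$ (which increases $A$ and decreases $L$) does not formally imply the general case from the simply connected one; this reduction requires an argument, typically using the bound $\int_\O e^u\,dx\le 8\pi$ together with the inequality applied to the filled-in set. Second, and more seriously, the cut-locus difficulty is not about one-sided jumps of $L'$: downward jumps of $L$ at transition times are harmless (they only decrease $\Phi$), but on a whole \emph{interval} of times during which a component of $\omega_t$ is multiply connected one has $\chi(\omega_t)\le 0$, Gauss--Bonnet only yields $L'\le -2\pi\chi(\omega_t)+\tfrac12 A$, and the needed inequality $2L'+4\pi-A\le 0$ fails; your observation that ``the Euler characteristic can only decrease'' makes matters worse, not better. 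This degeneration genuinely occurs even for simply connected $\omega$ (the inner parallel sets of a slit annulus are annuli). The hypotheses that $\O$ is simply connected and that $\D u+e^u\ge 0$ holds on all of $\O$ (not merely on $\omega$) are exactly what the classical proofs use to repair this step, e.g.\ by replacing a multiply connected component of $\omega_t$ by its simply connected hull inside $\O$ and comparing isoperimetric profiles; that argument is absent, so the proof is incomplete as written. A minor further point: $L(t)$ need not tend to $0$ as $t\to T^-$ (the parallels can collapse onto a cut locus of positive length), although $\Phi(T^-)\ge 0$ still follows from $A(t)\to 0$, which is all the argument needs.
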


The basic function,   which satisfies  the above properties and  will be used in the sequel,   is the following:
\begin{equation} \label{func}
	U_\l(x)=-2\ln\left( 1+\frac{\l^2|x|^2}{8} \right) + 2\ln \l,
\end{equation}
for $\l>0$. Observe that
$$
	\D U_\l+e^{U_\l} = 0 \qquad \mbox{and} \qquad \int_{B_r(0)} e^{U_\l}\,dx = 8\pi\frac{\l^2 r^2}{8+\l^2 r^2},
$$
for all $r>0$.

Now the idea is  to consider symmetric rearrangements with respect to two distinct measures. More precisely, let $w\in C^2(\ov\O)$ be such that
\begin{equation} \label{est}
	\D w + e^w \geq 0.
\end{equation} 
Then, any function $\phi\in C^2(\ov\O)$ can be equimeasurably rearranged with respect to the measures $e^w \,dx$ and $e^{U_\l}\,dx$ (see \cite{bandle}). Indeed, for $t>\min_{x\in\ov\O}\phi(x)$ let $\mathcal B_t^*$ be the ball centered at the origin such that
$$
	\int_{\mathcal B_t^*} e^{U_\l}\,dx = \int_{\{\phi>t\}} e^w\,dx.
$$
Then, if we let $\phi^*:\mathcal B_t^*\to\R$ to be $\phi^*(x)=\sup\bigr\{t\in\R\,:\,x\in\mathcal B_t^*\bigr\} $, it holds that $\phi^*$ is a symmetric equimeasurable rearrangement of $\phi$ with respect to the measures $e^w \,dx$ and $e^{U_\l}\,dx$, i.e. 
\begin{equation} \label{equi}
	\int_{\{\phi^*>t\}} e^{U_\l}\,dx = \int_{\{\phi>t\}} e^w\,dx,
\end{equation}
for all $t>\min_{x\in\ov\O}\phi(x)$. Moreover, by using the Bol's inequality stated in Proposition \ref{bol} we get the following estimate on the gradient of the rearrangement (see \cite{gui1}).
\begin{pro} \label{rearr}
Let $w\in C^2(\ov\O)$ be such that it satisfies \eqref{est} with $\O\subset\R^2$ being  simply-connected. Let $U_\l$ be as in \eqref{func}. Suppose $\phi\in C^2(\ov\O)$ is such that $\phi \equiv C$ on $\p\O$. If $\phi^*$ is the equimeasurable symmetric rearrangement of $\phi$ with respect to the measures $e^w \,dx$ and $e^{U_\l}\,dx$, then
$$
\int_{\{\phi^*=t\}} |\n \phi^*|\,d\s \leq \int_{\{\phi=t\}} |\n \phi|\,d\s,
$$
\end{pro}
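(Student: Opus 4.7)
The plan is to follow the classical Talenti-type rearrangement argument, adapted to use two measures via Bol's isoperimetric inequality. Throughout I assume $t$ is a regular value of $\phi$ (the non-regular values form a null set by Sard, and the general conclusion follows by approximation), so the level set $\{\phi=t\}$ is a smooth curve and $|\n\phi|>0$ along it. For brevity set
$$
\mu(t) := \int_{\{\phi>t\}} e^{w}\,dx = \int_{\{\phi^*>t\}} e^{U_\l}\,dx,
$$
where the equality is the equimeasurability relation \eqref{equi}.

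First I would compute $-\mu'(t)$ via the coarea formula, obtaining
$$
-\mu'(t) = \int_{\{\phi=t\}} \frac{e^{w}}{|\n\phi|}\,d\s = \int_{\{\phi^*=t\}} \frac{e^{U_\l}}{|\n\phi^*|}\,d\s.
$$
Because $\phi^*$ is radially symmetric with level sets that are circles centered at the origin, $|\n\phi^*|$ is constant along $\{\phi^*=t\}$, so Cauchy--Schwarz becomes an equality there:
$$
\Bigl(\int_{\{\phi^*=t\}} e^{U_\l/2}\,d\s\Bigr)^{2} \;=\; \Bigl(\int_{\{\phi^*=t\}}|\n\phi^*|\,d\s\Bigr)\cdot(-\mu'(t)).
$$
For $\phi$ I would use the genuine Cauchy--Schwarz inequality in the opposite direction:
$$
\Bigl(\int_{\{\phi=t\}} e^{w/2}\,d\s\Bigr)^{2} \;\leq\; \Bigl(\int_{\{\phi=t\}}|\n\phi|\,d\s\Bigr)\cdot(-\mu'(t)).
$$

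Next I would feed in Bol's inequality (Proposition \ref{bol}) applied to $w$ on the superlevel set $\{\phi>t\}$ (which is contained in $\O$ and has smooth boundary $\{\phi=t\}$):
$$
\Bigl(\int_{\{\phi=t\}} e^{w/2}\,d\s\Bigr)^{2} \;\geq\; \tfrac{1}{2}\mu(t)\bigl(8\pi-\mu(t)\bigr).
$$
On the symmetrized side, $U_\l$ satisfies $\D U_\l + e^{U_\l}=0$ and the disk $\{\phi^*>t\}$ is centered at the origin, which is exactly the equality case of Bol's inequality (the corresponding surface is a spherical cap), so
$$
\Bigl(\int_{\{\phi^*=t\}} e^{U_\l/2}\,d\s\Bigr)^{2} \;=\; \tfrac{1}{2}\mu(t)\bigl(8\pi-\mu(t)\bigr).
$$

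Combining the four identities/inequalities gives exactly what is needed: the symmetrized boundary integral is minimal, the non-symmetrized one satisfies the reverse direction, and the two share the common denominator $-\mu'(t)$, yielding
$$
\int_{\{\phi^*=t\}}|\n\phi^*|\,d\s \;=\; \frac{\tfrac12\mu(8\pi-\mu)}{-\mu'(t)} \;\leq\; \frac{\bigl(\int_{\{\phi=t\}} e^{w/2}\,d\s\bigr)^{2}}{-\mu'(t)} \;\leq\; \int_{\{\phi=t\}}|\n\phi|\,d\s.
$$
The main obstacle is the bookkeeping that makes Bol's inequality applicable, namely checking that $\mu(t)\leq 8\pi$ so that the right-hand side $\tfrac12\mu(8\pi-\mu)$ is nonnegative; this is where the implicit assumption $\int_\O e^w\,dx\leq 8\pi$ enters (inherited from the setting in which Proposition \ref{rearr} will be applied). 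A secondary technical point is to justify the coarea/Sard step for $C^2$ functions and to extend the pointwise inequality from regular $t$ to all $t$, but both are standard once the main chain of inequalities is in place.
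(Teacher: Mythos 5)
Your argument is correct and is essentially the proof the paper defers to \cite{gui1}: the chain ``coarea formula $+$ Cauchy--Schwarz on $\{\phi=t\}$ $+$ Bol's inequality on $\{\phi>t\}$, with equality in both steps on the radial side'' is exactly the Bandle/Talenti-type symmetrization argument used there. The only bookkeeping worth making explicit is that Proposition \ref{bol} is stated for $\omega\subset\subset\O$, so one should restrict to levels $t$ above the boundary value $C$ (which is all that is used in the applications in Sections \ref{sinh-gordon} and \ref{toda}), and, as you correctly note, the standing hypothesis $\int_\O e^{w}\,dx\le 8\pi$ must be carried along for Bol's inequality to apply.
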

for all $t>\min_{x\in\ov\O}\phi(x)$. 

We shall also need the following counterpart of the Bol's inequality in the radial setting (see \cite{gui1}).
\begin{pro} \label{rad}
Let $\psi\in C^{0,1}(\ov{B_R(0)})$ be a strictly decreasing radial function satisfying 
$$
	\int_{\p B_r(0)} |\n\psi| \,d\s \leq \int_{B_r(0)} e^{\psi}\,dx \quad \mbox{for a.e. } r\in (0,R) \qquad \mbox{and} \qquad \int_{B_R(0)} e^{\psi}\,dx \leq 8\pi.
$$
Then
$$
\left( \int_{\p B_R(0)} e^{\frac{\psi}{2}}\,d\s \right)^2 \geq \frac 12 \left( \int_{B_R(0)} e^{\psi}\,dx \right)\left( 8\pi-\int_{B_R(0)}e^{\psi}\,dx \right).
$$
\end{pro}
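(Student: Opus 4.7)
My plan is to reduce the inequality to the monotonicity in $r$ of a single scalar deficit built from the radial profile of $\psi$. First, I would exploit radial symmetry to write the relevant quantities as explicit functions of $r$ alone. Writing $\psi=\psi(r)$ and using that $\psi$ is strictly decreasing and Lipschitz (so $\psi'(r)\le 0$ a.e.\ on $(0,R)$ and $r\psi'(r)\to 0$ as $r\to 0^+$), set
\begin{align*}
M(r)&=\int_{B_r(0)}e^{\psi}\,dx=2\pi\int_0^r e^{\psi(s)}s\,ds,\\
L(r)&=\int_{\p B_r(0)}e^{\psi/2}\,d\s=2\pi r\,e^{\psi(r)/2},\\
F(r)&=\int_{\p B_r(0)}|\n\psi|\,d\s=-2\pi r\,\psi'(r).
\end{align*}
In this notation the hypothesis reads $F(r)\le M(r)$ a.e.\ in $(0,R)$, and the desired conclusion is $L(R)^2\ge \tfrac12 M(R)\bigl(8\pi-M(R)\bigr)$.

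Next, I would introduce the deficit
\[
h(r):=L(r)^2-\tfrac12 M(r)\bigl(8\pi-M(r)\bigr)
\]
and prove it is monotone non-decreasing. Using $M'(r)=2\pi r e^{\psi(r)}$ and the direct computation $\tfrac{d}{dr}L(r)^2=4\pi^2 r e^{\psi(r)}\bigl(2+r\psi'(r)\bigr)$, a one-line simplification yields
\[
h'(r)=4\pi^2 r e^{\psi}\bigl(2+r\psi'\bigr)-2\pi r e^{\psi}\bigl(4\pi-M\bigr)=2\pi r e^{\psi(r)}\bigl[M(r)-F(r)\bigr]\ge 0
\]
a.e.\ in $(0,R)$. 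Since $\psi$ is Lipschitz on $[0,R]$, $h$ is absolutely continuous there, so this pointwise-a.e.\ differential inequality integrates to $h(R)\ge h(0)$ by the Lebesgue fundamental theorem of calculus.

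Finally, the initial value $h(0)=0$ is immediate from $L(0)=M(0)=0$, so monotonicity gives $h(R)\ge 0$, which is precisely the claimed estimate. Note that the hypothesis $\int_{B_R}e^{\psi}\,dx\le 8\pi$ is not actually used in this monotonicity argument; it is simply the range in which the right-hand side is non-negative, so that the estimate is informative (when $M(R)>8\pi$ the inequality is trivially true since the left-hand side is non-negative). The only mild obstacle I anticipate is the low-regularity bookkeeping: because $\psi$ is merely $C^{0,1}$, the derivative $\psi'$ exists only a.e., but since $\psi'\in L^\infty(0,R)$ and $\psi$ is bounded on $[0,R]$, the functions $M$, $L^2$ and their combination $h$ are all absolutely continuous, so the passage from the a.e.\ sign of $h'$ to monotonicity of $h$ is standard. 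Overall, the argument collapses to a one-variable ODE computation in which the first hypothesis is exactly what makes a natural quantity monotone.
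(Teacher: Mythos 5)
Your proof is correct: the reduction to the scalar deficit $h(r)=L(r)^2-\tfrac12 M(r)\bigl(8\pi-M(r)\bigr)$, the computation $h'(r)=2\pi r e^{\psi(r)}\bigl[M(r)-F(r)\bigr]\ge 0$ a.e., and the absolute-continuity justification all check out, and the observation that the bound $\int_{B_R}e^{\psi}\,dx\le 8\pi$ is not needed for the monotonicity is accurate. The paper itself gives no proof and defers to \cite{gui1}, where essentially this same one-variable monotonicity argument is carried out, so your approach matches the intended one.
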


The main idea is then to relate strictly decreasing radial function $\psi$ with two radial solutions $U_{\l_1}, U_{\l_2}$ defined in \eqref{func} with $\l_2>\l_1$, such that $\psi=U_{\l_1}=U_{\l_2}$ on $\p B_R(0)$. 
\begin{pro} \label{comp}
$U_{\l_1}, U_{\l_2}$ defined in \eqref{func} with $\l_2>\l_1$. Let $\psi\in C^{0,1}(\ov{B_R(0)})$ be a strictly decreasing radial function satisfying 
\begin{equation} \label{estim}
	\int_{\p B_r(0)} |\n\psi| \,d\s \leq \int_{B_r(0)} e^{\psi}\,dx \quad \mbox{for a.e. } r\in (0,R)
\end{equation}
and $\psi=U_{\l_1}=U_{\l_2}$ on $\p B_R(0)$. Then,  either
$$
	 \int_{B_R(0)} e^{\psi}\,dx \leq \int_{B_R(0)} e^{U_{\l_1}}\,dx \qquad \mbox{or} \qquad \int_{B_R(0)} e^{\psi}\,dx \geq \int_{B_R(0)} e^{U_{\l_2}}\,dx.
$$	
Moreover, we have
$$
\int_{B_R(0)} \left(e^{U_{\l_1}}+e^{U_{\l_2}} \right)\,dx = 8\pi.
$$
\end{pro}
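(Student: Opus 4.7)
The plan is to obtain both assertions from the elementary quadratic $f(s) := \tfrac{1}{2} s(8\pi - s)$, exploiting the fact that the explicit profiles $U_{\l_1}, U_{\l_2}$ saturate Bol's inequality on $B_R(0)$.

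\medskip

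First I would settle the ``moreover'' identity. Setting $I_i := \int_{B_R(0)} e^{U_{\l_i}}\,dx = 8\pi\l_i^2 R^2/(8+\l_i^2 R^2)$, the matching condition $U_{\l_1}(R)=U_{\l_2}(R)$ rewrites, via the explicit formula \eqref{func}, as $\l_1/(1+\l_1^2R^2/8) = \l_2/(1+\l_2^2R^2/8)$. Cross-multiplying and cancelling the nonzero factor $\l_1-\l_2$ collapses this to the symmetric relation $\l_1\l_2 R^2 = 8$. Substituting into $I_1+I_2$ and simplifying yields $I_1+I_2 = 8\pi$; in particular $\l_1<\l_2$ forces $0<I_1<4\pi<I_2<8\pi$.

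\medskip

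For the dichotomy I would split on the size of $J := \int_{B_R(0)} e^\psi\,dx$. If $J>8\pi$, then $J>I_2$ trivially and the second alternative holds. If $J\leq 8\pi$, Proposition \ref{rad} applies and yields
$$
\Bigl(\int_{\p B_R(0)} e^{\psi/2}\,d\s\Bigr)^{\!2} \geq f(J).
$$
Since $\psi$ agrees with the (radial, hence constant-on-$\p B_R(0)$) value $U_{\l_i}(R)$ on $\p B_R(0)$ for either $i=1,2$, the left-hand side equals $\bigl(\int_{\p B_R(0)} e^{U_{\l_i}/2}\,d\s\bigr)^2$. A short direct computation with the explicit form of $U_\l$ shows that $U_{\l_i}$ saturates Bol's inequality on $B_R(0)$, so this boundary integral equals $f(I_i)$. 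Because $I_1+I_2=8\pi$ forces $f(I_1)=f(I_2)$, we conclude $f(J)\leq f(I_1)$. The quadratic $f$ is a downward parabola with roots $0, 8\pi$ and vertex at $s=4\pi$, so on $[0,8\pi]$ the inequality $f(s)\leq f(I_1)$ is equivalent to $s\leq I_1$ or $s\geq 8\pi - I_1 = I_2$. This is exactly the desired dichotomy.

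\medskip

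The only delicate point is that the hypotheses of Proposition \ref{comp} do not explicitly include the bound $J\leq 8\pi$ needed to invoke Proposition \ref{rad}; the ``large $J$'' case above absorbs this difficulty cleanly into the second branch of the dichotomy. Verifying the saturation $\bigl(\int_{\p B_R(0)} e^{U_{\l_i}/2}\,d\s\bigr)^2 = f(I_i)$ is a direct explicit computation from the formula for $U_\l$, so no substantive obstacle is expected.
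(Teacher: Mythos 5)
Your argument is correct and follows exactly the route the paper intends: the paper states Proposition \ref{comp} without proof (deferring to \cite{gui1}), but positions the radial Bol inequality of Proposition \ref{rad} as the key tool, which is precisely what you use, together with the explicit identities $\l_1\l_2R^2=8$, $I_1+I_2=8\pi$, and the saturation of Bol's inequality by $U_\l$. Your handling of the case $\int_{B_R(0)}e^{\psi}\,dx>8\pi$ (where Proposition \ref{rad} is not applicable but the second alternative holds trivially since $I_2<8\pi$) correctly closes the one gap in the hypotheses.
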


We can now state the Sphere Covering Inequality as in \cite{gui1}.
\begin{thm} \label{main}
Let $\O\subset\R^2$ be a simply-connected set and let $w_i\in C^2(\ov\O)$, $i=1,2$ be such that
\begin{equation} \label{eq}
	\D w_i + e^{w_i}=f_i(x) \qquad \mbox{in } \O,
\end{equation}
where $f_2\geq f_1\geq 0$ in $\O$. Suppose 
$$	
\left\{ \begin{array}{ll}
w_2\geq w_1, \:  w_2 \not\equiv w_1 & \mbox{in } \O, \vspace{0.2cm}\\
w_2=w_1 & \mbox{on } \p \O,
\end{array}
\right.
$$ 
Then, it holds
$$
	\int_\O \left( e^{w_1}+e^{w_2} \right)\,dx \geq 8\pi.
$$
Moreover, if some $f_i\not\equiv 0$ then the latter inequality is strict. 
\end{thm}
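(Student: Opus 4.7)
The plan is to reduce the inequality on $\O$ to a radial comparison on a disk by symmetrizing the height function $\phi=w_2-w_1$ against two measures and then invoking Proposition \ref{comp}. Set $M_i=\int_\O e^{w_i}\,dx$. If $\max(M_1,M_2)\ge 8\pi$ there is nothing to prove, so assume $M_1,M_2<8\pi$; together with $w_2\ge w_1$, $w_2\not\equiv w_1$ this gives $M_1<M_2$, and $\phi\ge 0$ with $\phi=0$ on $\p\O$ and $\phi\not\equiv 0$. Choose $\l_1>0$ and $R>0$ so that $\int_{B_R(0)}e^{U_{\l_1}}\,dx=M_1$; by the explicit formula following \eqref{func} this determines $\l_1^2R^2$ uniquely. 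Because $\D w_1+e^{w_1}=f_1\ge 0$ and $M_1<8\pi$, Propositions \ref{bol} and \ref{rearr} apply with $w=w_1$, so I can equimeasurably rearrange $\phi$ against $e^{w_1}\,dx$ and $e^{U_{\l_1}}\,dx$ to obtain a non-increasing radial function $\phi^*$ on $B_R(0)$ with $\phi^*=0$ on $\p B_R(0)$.

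Set $\psi=U_{\l_1}+\phi^*$ on $B_R(0)$. Then $\psi$ is Lipschitz, strictly radially decreasing (since $U_{\l_1}$ is), and equals $U_{\l_1}$ on $\p B_R(0)$. Taking $F(s)=e^s$ in the layer-cake identity $\int_{B_R(0)}F(\phi^*)e^{U_{\l_1}}\,dx=\int_\O F(\phi)e^{w_1}\,dx$ (a consequence of \eqref{equi}) yields
\[
\int_{B_R(0)}e^{\psi}\,dx=\int_\O e^{w_2}\,dx=M_2.
\]
To verify \eqref{estim}, fix $r\in(0,R)$, set $t_r=\phi^*(r)$, so $\{\phi^*>t_r\}=B_r(0)$; Proposition \ref{rearr} together with the divergence theorem applied to $-\D\phi=(e^{w_2}-e^{w_1})-(f_2-f_1)$ on $\{\phi>t_r\}$ gives
\[
\int_{\p B_r(0)}|\n\phi^*|\,d\s\le\int_{\{\phi=t_r\}}|\n\phi|\,d\s\le\int_{\{\phi>t_r\}}(e^{w_2}-e^{w_1})\,dx,
\]
where the last inequality uses $f_2\ge f_1$. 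Adding $\int_{\p B_r(0)}|\n U_{\l_1}|\,d\s=\int_{B_r(0)}e^{U_{\l_1}}\,dx$ and using the equimeasurability identity $\int_{B_r(0)}e^{U_{\l_1}}\,dx=\int_{\{\phi>t_r\}}e^{w_1}\,dx$, one obtains
\[
\int_{\p B_r(0)}|\n\psi|\,d\s\le\int_{\{\phi>t_r\}}e^{w_2}\,dx=\int_{B_r(0)}e^{\psi}\,dx,
\]
which is \eqref{estim}.

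Finally, pick $\l_2>\l_1$ with $\l_1\l_2R^2=8$, so that $U_{\l_1}(R)=U_{\l_2}(R)$ and hence $\psi=U_{\l_1}=U_{\l_2}$ on $\p B_R(0)$; a direct calculation from \eqref{func} gives $\int_{B_R(0)}(e^{U_{\l_1}}+e^{U_{\l_2}})\,dx=8\pi$. Proposition \ref{comp} forces either $M_2=\int_{B_R(0)}e^{\psi}\,dx\le M_1$ or $M_2\ge 8\pi-M_1$; the first alternative contradicts $M_1<M_2$, so $M_1+M_2\ge 8\pi$. For strictness when some $f_i\not\equiv 0$: if $f_2\not\equiv f_1$ the middle inequality in the preceding display is strict and propagates through Proposition \ref{comp}; if instead $f_2\equiv f_1$ but $f_1\not\equiv 0$, strictness comes from Bol's inequality (Proposition \ref{bol}) used in Proposition \ref{rearr}, which becomes strict as soon as $\D w_1+e^{w_1}\not\equiv 0$. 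The main technical obstacle is the regularity of the level sets $\{\phi=t\}$, which need not be smooth for every $t$; this is the standard issue in isoperimetric symmetrization and is handled by Sard's theorem combined with the coarea formula, applying the identities above for almost every $t$ and reassembling at the end.
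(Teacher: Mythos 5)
Your proposal is correct and follows essentially the same route the paper sketches (and defers to \cite{gui1} for): symmetrize $w_2-w_1$ with respect to the measures $e^{w_1}\,dx$ and $e^{U_{\l_1}}\,dx$, verify \eqref{estim} for $\psi=U_{\l_1}+\varphi^*$ via Proposition \ref{rearr} and the divergence theorem, and conclude with Proposition \ref{comp}. The only point to add is that a $\l_2>\l_1$ with $\l_1\l_2R^2=8$ exists only when $M_1=\int_\O e^{w_1}\,dx<4\pi$; this costs nothing, since if $M_1\ge 4\pi$ the strict inequality $M_2>M_1$ already gives $M_1+M_2>8\pi$.
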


The idea is to consider a symmetric rearrangement $\varphi$ of $w_2-w_1$ with respect to the measures $e^{w_1}\,dx$ and $e^{U_{\l_1}}\,dx$ for some suitable $\l_2$. Then, by using equation \eqref{eq} and the properties of the rearrangements (see also Proposition \ref{rearr}), it is possible to show that \eqref{estim} holds true for $\psi=U_{\l_1}+\varphi$. Applying then Proposition \ref{comp} one can deduce that
$$
	\int_\O \left( e^{w_1}+e^{w_2} \right)\,dx \geq \int_{B_R(0)} \left( e^{U_{\l_1}}+e^{U_{\l_2}} \right)\,dx = 8\pi.
$$
See \cite{gui1} for full details.
\begin{rem} \label{rem:ineq}
We point out that the Sphere Covering Inequality holds as long as the Bol's inequality holds. Indeed, if $\Delta w+e^w \geq 0$ in $\Omega$  which is simply-connected, then the Bol's and Sphere Covering Inequalities hold in any region $\Omega_1 \subset \Omega$ for general boundary data. In particular, $\Omega_1$ does not need to be simply-connected.  Moreover, for a multiply-connected domain $\Omega$ the Bol's and Sphere Covering inequalities hold provided we have constant boundary conditions (see \cite{bar-lin}).
\end{rem}

\

\section{Asymmetric Sinh-Gordon equation} \label{sinh-gordon}

\medskip

In this section we study uniqueness and symmetry of solutions of asymmetric Sinh-Gordon equation \eqref{eq:sinh-gordon}, and  prove Theorem \ref{thm0} and Theorem \ref{thm2}. The first one relies mainly on the Sphere Covering Inequality (see Theorem \ref{main}). On the other hand, the second one is based on the arguments which yield the Sphere Covering Inequality, which we collected in Section \ref{ineq}. 

Let us start with the case $supp\, \mathcal P \subset [0,1]$ which we recall here for convenience
\begin{equation} \label{eq1'}
\left\{ \begin{array}{rll}
-\D u = & \rho \dfrac{ e^{u}+e^{a u} }{\int_\O \left(e^{u}+ e^{a u}\right) \,dx} & \mbox{in } \O, \vspace{0.2cm}\\
 u = & g(x) \geq 0 & \mbox{on } \p \O,
\end{array}
\right.
\end{equation}
with $a\in(0,1)$, $\rho>0$, and $g \in C(\partial \Omega)$. 
\medskip

\begin{proof}[Proof of Theorem \ref{thm0}]
Let $u_1$ and $u_2$ be solutions of equation \eqref{eq1'} satisfying the assumptions of Theorem \eqref{thm0}. We aim to show that $u_1\equiv u_2$. 
We proceed by contradiction by assuming that this is not the case. Rewrite  equation \eqref{eq1'} as
$$
\D u +  \rho \dfrac{ 2e^{u} }{\int_\O \left(e^{u}+ e^{a u}\right) \,dx} = \rho \dfrac{ e^{u}-e^{a u} }{\int_\O \left(e^{u}+ e^{a u}\right) \,dx}\,.
$$
Let 
\begin{equation} \label{v}
	v= u+\log 2+\log \rho -\log \left(\int_\O \left(e^{u}+ e^{a u}\right) \,dx\right).
\end{equation}
Then $v$ satisfies 
\begin{equation} \label{equa}
	\D  v + e^{v} = f(u):= \rho \dfrac{ e^{u}-e^{a u} }{\int_\O \left(e^{u}+ e^{a u}\right) \,dx}\,.
\end{equation}
It follows from \eqref{integralEquality} that there exists two regions $\Omega_1, \Omega_2 \subset \Omega$ (not necessarily simply-connected) such that $u_1>u_2$ in $\Omega_1$, $u_2>u_1$ in $\Omega_2$, and $u_1=u_2$ on $\partial \Omega_1 \cup \partial \Omega_2$. We have that $v_1, v_2$ defined by \eqref{v} satisfy 
\[\D  v_i + e^{v_i} = f(u_i) \ \ \hbox{ in}\ \ \Omega.\]
Moreover 
$$
v_1>v_2 \quad \mbox{in }\Omega_1, \quad v_2>v_1 \quad \mbox{in } \Omega_2 \qquad \mbox{and} \qquad v_1=v_2 \quad \mbox{on } \partial \Omega_1 \cup \partial \Omega_2.
$$ 
Since $g \geq 0$, both solutions $u_1$ and $u_2$ are positive in $\Omega$ by the maximum principle. By the latter fact it is also easy to see that 
$$
f(u_1)>f(u_2)>0 \quad \mbox{in } \Omega_1 \qquad \mbox{and} \qquad f(u_2)>f(u_1)>0 \quad \mbox{in } \Omega_2.
$$ 
Therefore, by applying the Sphere Covering Inequality (Theorem \ref{main}, see also Remark \ref{rem:ineq}), we get (observe that $f_i \not\equiv 0$)
$$
	\int_{\Omega} \left( e^{v_1}+e^{v_2} \right)\,dx \geq \int_{\Omega_1} \left( e^{v_1}+e^{v_2} \right)\,dx+\int_{\Omega_2} \left( e^{v_1}+e^{v_2} \right)\,dx>16\pi.
$$
Recalling now the definition of $v$ in \eqref{v} and \eqref{integralEquality} we have
\begin{eqnarray*}
4\rho&=&\dfrac{ 2\rho }{\int_\O \left(e^{u_1}+ e^{a u_1}\right) \,dx} \left(  \int_\O \left(e^{u_1}+ e^{a u_1}\right) \,dx+ \int_\O \left(e^{u_1}+ e^{a u_1}\right) \,dx  \right) \\
&\geq& \dfrac{ 2\rho }{\int_\O \left(e^{u_1}+ e^{a u_1}\right) \,dx} \int_{\O} \left( e^{u_1}+e^{u_2} \right)\,dx=\int_{\Omega} \left( e^{v_1}+e^{v_2} \right)\,dx > 16\pi.
\end{eqnarray*}
Hence $\rho > 4\pi$, which is a contradiction. The proof is now complete. 
\end{proof}

\medskip

\begin{proof}[Proof of Corollary \ref{thm1}]
Without loss of generality we can assume that $\O$ and $g$ are evenly symmetric with respect to the line $y=0$. Suppose $u$ is a solution of \eqref{eq1}, which is not evenly symmetric about $y=0$. Then $u_1=u$ and $u_2(x,y)=u(x,-y)$ are two distinct solutions of \eqref{eq1} satisfying the condition \eqref{integralEquality}. Thus it follows from Theorem~\ref{thm0} that $\rho > 4\pi$. 
\end{proof}

\medskip

We consider now the general case $supp\, \mathcal P \subset [-1,1]$ which yields to \eqref{eq2}, i.e.:
\begin{equation} \label{eq2'}
\left\{ \begin{array}{rll}
-\D u = & \rho \dfrac{ e^{u}-e^{-a u} }{\int_\O \left(e^{u}+ e^{-a u}\right) \,dx} & \mbox{in } \O, \vspace{0.2cm}\\
 u = & 0 & \mbox{on } \p \O,
\end{array}
\right.
\end{equation}
with $a\in(0,1)$, $\rho>0$. We give here the proof of the uniqueness result for the trivial solution $u\equiv0$.

\medskip

\begin{proof}[Proof of Theorem \ref{thm2}]
Let $u$ be a solution of \eqref{eq2'}. We will show that $u\equiv 0$ in $\O$. Assume by contradiction this is not the case and let 
\begin{align} \label{vi}
\begin{split}
	& v_1 = -au + \log \rho -\log \left(\int_\O \left(e^{u}+ e^{-a u}\right) \,dx\right), \\
	& v_2=u+\log \rho -\log \left(\int_\O \left(e^{u}+ e^{-a u}\right) \,dx\right).
\end{split}	
\end{align}
Then we have
$$
	\D(v_2-v_1) +(1+a)\left( e^{v_2}-e^{v_1} \right)=0. 
$$
Letting further
\begin{equation} \label{w}
 w_i = v_i +\log(1+a), \qquad i=1,2,
\end{equation}
we deduce
\begin{equation} \label{equat}
	\D(w_2-w_1) +\left( e^{w_2}-e^{w_1} \right)=0.
\end{equation}
Since $u=0$ on $\p\O$, we get
\begin{equation} \label{prop'}
	w_1=w_2= \log(1+a)+\log \rho -\log \left(\int_\O \left(e^{u}+ e^{-a u}\right) \,dx\right) \qquad \mbox{on } \p\O.
\end{equation}
It follows that  there exists at least one region $\wtilde\O\subseteq\O$ (not necessarily simply-connected) such that 
\begin{equation} \label{set'}
\left\{ \begin{array}{ll}
w_1\neq w_2  & \mbox{in } \wtilde\O, \vspace{0.2cm}\\
w_1 =  w_2 & \mbox{on } \p \wtilde\O,
\end{array}
\right.
\end{equation}
and
\begin{equation} \label{eqq}
	\D(w_2-w_1) +\left( e^{w_2}-e^{w_1} \right)=0 \qquad \mbox{in } \wtilde\O.
\end{equation}
We point out that $\wtilde\O$ may coincide with $\O$. Without loss of generality we may assume $w_2>w_1$. From equation \eqref{eq2'} and the definitions of $w_i$ in \eqref{vi} and  \eqref{w} we derive that
$$
	\D v_1 +ae^{v_1} = ae^{v_2}
$$
and thus
\begin{equation} \label{pos}
	\D w_1 +e^{w_1} = \left(\frac{1}{1+a}\,e^{w_1}+ae^{v_2}\right)>0 \qquad \mbox{in } \O.
\end{equation}
We now proceed as in the proof of the Sphere Covering Inequality. Let $\l_2>\l_1$ be such that $U_{\l_2}>U_{\l_1}$ in $B_1(0)$ and $U_{\l_1}=U_{\l_2}$ on $\p B_1(0)$, where $U_\l$ is given as in \eqref{func}, and such that
$$
	\int_{\wtilde\O} e^{w_1}\,dx = \int_{B_1(0)} e^{U_{\l_1}}\,dx.
$$ 
Since $w_1$ satisfies \eqref{pos} we can find a symmetric equimeasurable rearrangement $\varphi^*$  of $w_2-w_1$ with respect to the two measures $e^{w_1} \,dx$ and $e^{U_{\l_1}}\,dx$. See the discussion after \eqref{est}. In particular we have 
$$
	\int_{\{\varphi^*>t\}} e^{U_{\l_1}}\,dx = \int_{\{w_2-w_1>t\}} e^{w_1}\,dx
$$
for $t \ge 0$.
We first estimate the gradient of the rearrangement by Proposition \ref{rearr}, then exploit equation \eqref{eqq},  the equation satisfied by $U_{\l_1}$ and the properties of the rearrangements  to obtain
\begin{align*}
	\int_{\{ \varphi^*=t \}} |\n\varphi^*|\,d\s & \leq \int_{\{ w_2-w_1=t \}} |\n(w_2-w_1)|\,d\s \\
	& = \int_{\{w_2-w_1>t\}} \left( e^{w_2}-e^{w_1} \right)\,dx \\
		& = \int_{\{ \varphi^*>t \}} e^{U_{\l_1}+\varphi^*}\,dx - \int_{\{ \varphi^*>t \}} e^{U_{\l_1}}\,dx \\
		& = \int_{\{ \varphi^*>t \}} e^{U_{\l_1}+\varphi^*}\,dx - \int_{\{ \varphi^*=t \}} |\n{U_{\l_1}}|\,d\s,
\end{align*}
for a.e. $t>0$. Therefore
$$
	\int_{\{ \varphi^*=t \}} |\n\bigr(U_{\l_1}+\varphi^*\bigr)|\,d\s \leq \int_{\{ \varphi^*>t \}} e^{U_{\l_1}+\varphi^*}\,dx,
$$
for a.e. $t>0$. Since $\varphi^*$ is decreasing by construction, $U_{\l_1}+\varphi^*$ is a strictly decreasing function. Moreover, by the above estimate
we derive
\begin{equation} \label{estimate}
	\int_{\p B_r(0)} |\n\bigr(U_{\l_1}+\varphi^*\bigr)|\,d\s \leq \int_{B_r(0)} e^{U_{\l_1}+\varphi^*}\,dx \qquad \mbox{for a.e. } r>0.
\end{equation}
Furthermore, since $\varphi*\geq 0$, we clearly have
$$
	\int_{B_1(0)} e^{U_{\l_1}+\varphi^*}\,dx \geq \int_{B_1(0)} e^{U_{\l_1}}\,dx.
$$
By the latter estimate, \eqref{pos} and  \eqref{estimate} we can exploit Proposition \ref{comp} with $\psi=U_{\l_1}+\varphi^*$ to get
$$
	\int_{B_1(0)} e^{U_{\l_1}+\varphi^*}\,dx \geq \int_{B_1(0)} e^{U_{\l_2}}\,dx.
$$
Thus 
$$
	\int_{\wtilde\O}\left( e^{w_1}+e^{w_2} \right)\,dx =	\int_{B_1(0)} \left( e^{U_{\l_1}}+e^{U_{\l_1}+\varphi^*} \right)\,dx \geq \int_{B_1(0)} \left( e^{U_{\l_1}}+e^{U_{\l_2}} \right)\,dx = 8\pi.
$$

\medskip

\no Recall now the definitions of $w_i$ in \eqref{vi} and \eqref{w}. We have
$$
 \dfrac{\rho(1+a) }{\int_\O \left(e^{u}+ e^{-a u}\right) \,dx} \,\int_{\wtilde\O}\left( e^{u}+e^{-a u} \right)\,dx \geq 8\pi,
$$
and hence
$$
	\frac{8\pi}{1+a} \leq \dfrac{\rho }{\int_\O \left(e^{u}+ e^{-a u}\right) \,dx} \,\int_{\wtilde\O}\left( e^{u}+e^{-a u} \right)\,dx  \leq \rho.
$$
The above inequality is indeed strict. To see this, we note that the equality would yield the equality in \eqref{estimate} which corresponds to equality in the Bol's inequality in Proposition \ref{bol} for $w_1$ and consequently $w_1$ should satisfy $\D w_1+e^{w_1}=0$,  which contradicts \eqref{pos}. 
In view of   the assumption  $\rho\leq\dfrac{8\pi}{1+a}$,  we therefore   have shown  $u\equiv 0$ in $\O$ as desired.
\end{proof}

\

\section{Cosmic string equation} \label{string}

\medskip

In this section we study the cosmic string equation
\begin{equation} \label{eq:string'}
\left\{ \begin{array}{rll}
-\D u = & e^{a u} +h(x)\,e^u & \mbox{in } \O, \vspace{0.2cm}\\
 u = & g(x) \geq 0 & \mbox{on } \p \O,\end{array}
\right.
\end{equation}  
with $a>0$ and $h$ as in \eqref{h}. We will rewrite this equation and apply the Sphere Covering Inequality, Theorem~\ref{main}, to prove Theorem \ref{thm3}. 

\medskip

\begin{proof}[Proof of Theorem \ref{thm3}]
First suppose $a>1$. Let $u_1$ and $u_2$ be two solutions of \eqref{eq:string'} with $a>1$, $N\geq0$ satisfying \eqref{CosmicCanNotIntersect}. We proceed by contradiction. Suppose there exists $\Omega_1, \Omega_2 \subset \Omega$ (not necessarily simply-connected) such that 
\[u_1>u_2 \ \ \hbox{in} \ \ \Omega_1 \ \ \ \  \ \ \hbox{and}\  \  \ \ \ \  u_2>u_1 \ \ \hbox{in} \ \ \Omega_2 .\]
The equation \eqref{eq:string'} can be rewritten as 
$$
	\D u + 2e^{a u} =  e^{a u} -h(x)\,e^u.
$$
Multiply this equation by $a$ and let 
\begin{equation} \label{v'}
	v= au + \log\left( 2a  \right).
\end{equation}
Then $v$ satisfies
\begin{equation} \label{equa'}
	\D  v + e^{v} = f(u):= a	\bigr(e^{a u} -h(x)\,e^u\bigr).
\end{equation}
Let $v_1, v_2$ be defined by \eqref{v'} ($u$ replaced by $u_1$ and $u_2$, respectively). Then we have 
\[\D  v_i + e^{v_i} = f(u_i) \ \ \hbox{ in}\ \ \Omega.\]
Furthermore, we get
$$
v_1>v_2 \quad \mbox{in }\Omega_1, \quad v_2>v_1 \quad \mbox{in } \Omega_2 \qquad \mbox{and} \qquad v_1=v_2 \quad \mbox{on } \partial \Omega_1 \cup \partial \Omega_2.
$$ 
Since $g \geq 0$, it follows from the maximum principle that both solutions $u_1$ and $u_2$ are positive inside $\Omega$. Note also that $h(x)\leq 1$. It is now easy to see that
$$
f(u_1)>f(u_2)>0 \quad \mbox{in } \Omega_1 \qquad \mbox{and} \qquad f(u_2)>f(u_1)>0 \quad \mbox{in } \Omega_2.
$$
By the Sphere Covering Inequality (Theorem \ref{main}, see also Remark \ref{rem:ineq}) we conclude that
\begin{eqnarray*}
\int_{\Omega} \left(e^{v_1}+e^{v_2}\right)\, dx\geq \int_{\Omega_1}\left( e^{v_1}+e^{v_2}\right)\, dx+\int_{\Omega_2}\left( e^{v_1}+e^{v_2}\right)\, dx> 16\pi. 
\end{eqnarray*}
Using the expression of $v$ in \eqref{v'} we deduce
$$
	 2a  \int_{\O} \left( e^{au_1}+e^{a u_2} \right)\,dx > 16\pi,
$$
which contradicts the assumption 
$$
\int_{\O} \left(e^{a u_1}+e^{a u_2} \right)\,dx \leq \dfrac{8\pi}{a}\,.
$$ 
For what concerns the case $a<1$ we write \eqref{eq:string'} in the form
$$
	\D u + 2e^{u} =  \left(e^u-e^{a u}\right) +\left(e^u-h(x)\,e^u\right).
$$
The argument is then developed as before so we skip the details. The proof is now complete. 
\end{proof}

\begin{proof}[Proof of Corollary \ref{thm03}]
Without loss of generality that $\O$ and $g$ are evenly symmetric with respect to the line $y=0$. Observe that the associated Green's function (and hence $h$, see \eqref{h}) is evenly symmetric with respect to the line $y=0$. We consider just the case $a>1$ since for $a<1$ one can proceed in the same way. Suppose $u$ is a solution of \eqref{eq1} satisfying \eqref{cosmicEnergyBound0}, which is not evenly symmetric about $y=0$. Then $u_1=u$ and $u_2(x,y)=u(x,-y)$ are two distinct intersecting solutions of \eqref{eq:string}. It follows from Theorem \ref{thm3} that 
$$
2\int_{\O} e^{a u} \,dx = \int_{\O} \left(e^{a u_1}+e^{a u_2} \right)\,dx> \dfrac{8\pi}{a}\,.
$$ 
which contradicts \eqref{cosmicEnergyBound0}. 
\end{proof}

\section{Liouville-type systems in domains} \label{toda}

\medskip

In this section we consider the class of Liouville-type systems
\begin{equation} \label{eq:toda'}
\left\{ \begin{array}{ll}
 			\begin{array}{ll}
 			-\D u_1 = & Ae^{u_1} - Be^{u_2} \vspace{0.2cm} \\
 			-\D u_2 = & B'e^{u_2} - A'e^{u_1}
 			\end{array}  & \mbox{in }  \O, \vspace{0.2cm} \\
\ \: u_1=u_2 = g(x) & \mbox{on } \p \O,
\end{array}
\right.
\end{equation} 
where $A, A', B, B'$ satisfy  condition \eqref{cond}, and prove Theorem \ref{thm4}. 

\medskip

\begin{proof}[Proof of Theorem \ref{thm4}]

Let $(u_1, u_2)$ be a solution of \eqref{eq:toda'}. We will prove that there exists a unique $u$ solving a mean field equation as stated in Theorem \ref{thm4} such that $u_1\equiv u_2 \equiv u$ in $\O$. Assume by contradiction $u_1\not\equiv u_2$. As in the proof of Theorem \ref{thm2}, the strategy is to apply the argument of the Sphere Covering Inequality in Theorem \ref{main} (see Section \ref{ineq}) to the functions $u_1$ and $u_2$. We start by recalling that the coefficients in \eqref{eq:toda'} are such that $A+A'=B+B':=M$. Hence 
$$
	\D(u_2-u_1) +M\left( e^{u_2}-e^{u_1} \right)=0. 
$$
Letting 
\begin{equation} \label{w'}
 w_i = u_i +\log M, \qquad i=1,2,
\end{equation}
we deduce that
\begin{equation} \label{equat'}
	\D(w_2-w_1) +\left( e^{w_2}-e^{w_1} \right)=0,
\end{equation}
and 
\begin{equation} \label{prop''}
	w_1=w_2= \log M+g(x) \qquad \mbox{on } \p\O.
\end{equation}
It follows that there exists at least one region $\wtilde\O\subseteq\O$ (not necessarily simply-connected) such that 
\begin{equation} \label{set''}
\left\{ \begin{array}{ll}
w_1\neq w_2  & \mbox{in } \wtilde\O, \vspace{0.2cm}\\
w_1 =  w_2 & \mbox{on } \p \wtilde\O,
\end{array}
\right.
\end{equation}
and
\begin{equation} \label{eqq'}
	\D(w_2-w_1) +\left( e^{w_2}-e^{w_1} \right)=0 \qquad \mbox{in } \O.
\end{equation}
Without loss of generality we can  assume $w_2>w_1$ in $\wtilde \O$. 

Using the first equation in \eqref{eq:toda'}, the definitions of $w_i$ in \eqref{w}, and the fact that $M=A+A'$ we get
$$
	\D u_1 +Ae^{u_1} = Be^{u_2}
$$
and hence
\begin{equation} \label{pos'}
	\D w_1 +e^{w_1} = \left(\frac{A'}{A+A'}\,e^{w_1}+Be^{u_2}\right)\geq 0 \qquad \mbox{in } \O.
\end{equation}
In the above two steps we assumed $A>0$. However, the above holds true even if $A=0$, by simple manipulations. The rest of the argument is very similar to the proof of Theorem \ref{thm2} so we will skip the details. Let $\l_2>\l_1$ be such that $U_{\l_2}>U_{\l_1}$ in $B_1(0)$ and $U_{\l_1}=U_{\l_2}$ on $\p B_1(0)$, where $U_\l$ is given as in \eqref{func}, and 
$$
	\int_{\wtilde\O} e^{w_1}\,dx = \int_{B_1(0)} e^{U_{\l_1}}\,dx.
$$ 
Recalling \eqref{pos'} we can find a symmetric equimeasurable rearrangement $\varphi^*$  of $w_2-w_1$ with respect to the two measures $e^{w_1} \,dx$ and $e^{U_{\l_1}}\,dx$. Reasoning as in the proof of Theorem \ref{thm2} we get  
$$
	\int_{\p B_r(0)} |\n\bigr(U_{\l_1}+\varphi^*\bigr)|\,d\s \leq \int_{B_r(0)} e^{U_{\l_1}+\varphi^*}\,dx \qquad \mbox{for a.e. } r>0.
$$
Furthermore $U_{\l_1}+\varphi^*$ is a strictly decreasing function. Hence from Proposition~\ref{comp} to $\psi=U_{\l_1}+\varphi^*$ we deduce
$$
	\int_{B_1(0)} e^{U_{\l_1}+\varphi^*}\,dx \geq \int_{B_1(0)} e^{U_{\l_2}}\,dx.
$$
Therefore
$$
	\int_{\wtilde\O}\left( e^{w_1}+e^{w_2} \right)\,dx =	\int_{B_1(0)} \left( e^{U_{\l_1}}+e^{U_{\l_1}+\varphi^*} \right)\,dx \geq \int_{B_1(0)} \left( e^{U_{\l_1}}+e^{U_{\l_2}} \right)\,dx = 8\pi.
$$

\medskip

\no It follows from the definitions of $w_i$ that 
$$
	M\,\int_{\wtilde\O}\left( e^{u_1}+e^{u_2} \right)\,dx \geq 8\pi.
$$
Thus 
$$
	\frac{8\pi}{M} \leq \int_{\wtilde\O}\left( e^{u_1}+e^{u_2} \right)\,dx \leq \int_{\O}\left( e^{u_1}+e^{u_2} \right)\,dx .
$$
Arguing as in the proof of Theorem \ref{thm2} it is easy to show that the latter inequality is strict, which is a contradiction. Hence $u_1\equiv u_2$ in $\O$. Letting $u:=u_1=u_2$ and using the system \eqref{eq:toda'} we get
$$
	\left\{ \begin{array}{rll}
-\D u = & D e^{u} & \mbox{in } \O, \vspace{0.2cm}\\
 u = & g(x) & \mbox{on } \p \O,
\end{array}
\right.
$$
where we recall $D:= A-B=A'-B'$. Note that $M:=A+A'=B+B'$ and hence 
$$
	 \int_\O D e^u\,dx \leq  4\pi \frac{D}{M} = 4\pi\frac{A-B}{A+A'} < 4\pi.
$$
Since $\O$ is simply-connected and the latter bound holds true, by the Sphere Covering Inequality of Theorem \ref{main} we deduce that $u$ is unique. This concludes the proof of Theorem \ref{thm4}.
\end{proof}

\medskip

We conclude this section by giving the proof of Theorem~\ref{thm5} regarding the uniqueness of solutions of the system 
\begin{equation} \label{eq:toda-sing'}
\left\{ \begin{array}{ll}
 			\begin{array}{ll}
 			-\D u_1 = & Ae^{u_1} - Be^{u_2} - 4\pi\a \d_0\vspace{0.2cm} \\
 			-\D u_2 = & B'e^{u_2} - A'e^{u_1} -4\pi \a \d_0
 			\end{array}  & \mbox{in }  \O, \vspace{0.2cm} \\
\ \: u_1=u_2 = g(x) & \mbox{on } \p \O. 
\end{array}
\right.
\end{equation}

\medskip

\begin{proof}[Proof of Theorem \ref{thm5}]
Let $(u_1, u_2)$ be a solution of \eqref{eq:toda-sing'} with $\a\geq 0$. By using the Green's function $G_0$ with pole in $0$ as in \eqref{green} we desingularize the problem by setting
$$
\wtilde u_i(x) = u(x) +4\pi\a G_0(x).
$$ 
Indeed \eqref{eq:toda-sing'} is equivalent to 
\begin{equation} \label{eq:toda-sing''}
\left\{ \begin{array}{ll}
 			\begin{array}{ll}
 			-\D \wtilde u_1 = & Ah(x)e^{\wtilde u_1} - Bh(x)e^{\wtilde u_2} \vspace{0.2cm} \\
 			-\D \wtilde u_2 = & B'h(x)e^{\wtilde u_2} - A'h(x)e^{\wtilde u_1} 
 			\end{array}  & \mbox{in }  \O, \vspace{0.2cm} \\
\ \: \wtilde u_1=\wtilde u_2 = g(x) & \mbox{on } \p \O,
\end{array}
\right.
\end{equation}
where 
\begin{equation} \label{h'}
	h(x)=e^{-4\pi \a G_0(x)}.
\end{equation}
Observe that 
$$
	h>0 \quad \mbox{in } \O\setminus\{0\} \qquad \mbox{and} \qquad h(x)\cong |x|^{2\a} \quad \mbox{near } 0.
$$
Assume now by contradiction that $\wtilde u_1\not\equiv \wtilde u_2$ and suppose without loss of generality that $\wtilde u_2>\wtilde u_1$ in $\wtilde\O \subseteq\O$. Recall that $A+A'=B+B':=M$. Therefore, by \eqref{eq:toda-sing''} we have 
$$
	\D(\wtilde u_2-\wtilde u_1) +Mh(x)\bigr( e^{\wtilde u_2}-e^{\wtilde u_1} \bigr)=0.
$$
Note also that $h(x)\leq 1$. Since $\wtilde u_2>\wtilde u_1$ in $\wtilde\O$ we deduce
$$
	\D(\wtilde u_2-\wtilde u_1) +M\bigr( e^{\wtilde u_2}-e^{\wtilde u_1} \bigr)\geq 0 \qquad \mbox{in } \wtilde\O.
$$
With an argument similar to the one in the  proof of Theorem \ref{thm4} we get a contradiction. Thus $\wtilde u_1 \equiv \wtilde u_2 := \wtilde u$ and $\wtilde u$ satisfies  
$$
	\left\{ \begin{array}{rll}
-\D \wtilde u = & D h(x) e^{\wtilde u}  & \mbox{in } \O, \vspace{0.2cm}\\
 \wtilde u = & g(x) & \mbox{on } \p \O,
\end{array}
\right.
$$
where $D:= A-B=A'-B'$. Arguing as in the proof of Theorem \ref{thm4} we deduce that $\wtilde u$ is unique. 
\end{proof}

\

\begin{center}
\textbf{Acknowledgements}
\end{center}

The authors would like to thank Prof. G. Tarantello and Dr. W. Yang for the discussions concerning the topic of this paper. 

\

\end{document}